\title{Ideal-adic semi-continuity problem for \linebreak minimal log discrepancies}
\author{Masayuki Kawakita}
\address{Research Institute for Mathematical Sciences, Kyoto University, Kyoto 606-8502, Japan}
\email{masayuki@kurims.kyoto-u.ac.jp}
\theoremstyle{plain}
\newtheorem{theorem}{Theorem}[section]
\newtheorem{proposition}[theorem]{Proposition}
\newtheorem{lemma}[theorem]{Lemma}
\newtheorem{corollary}[theorem]{Corollary}
\newtheorem{conjecture}[theorem]{Conjecture}
\newtheorem{lemma-definition}[theorem]{Lemma-Definition}
\newtheorem*{theorem*}{Theorem}
\newtheorem*{conjecture*}{Conjecture}
\theoremstyle{definition}
\newtheorem{definition}[theorem]{Definition}
\theoremstyle{remark}
\newtheorem{remark}{Remark}[theorem]
\newtheorem{remark-example}[remark]{Remark-Example}
\newtheorem{example}[theorem]{Example}
\newtheorem*{acknowledgements}{Acknowledgements}
\newcommand{\bA}{\mathbb{A}}
\newcommand{\bL}{\mathbb{L}}
\newcommand{\bQ}{\mathbb{Q}}
\newcommand{\bR}{\mathbb{R}}
\newcommand{\bZ}{\mathbb{Z}}
\newcommand{\cB}{\mathcal{B}}
\newcommand{\cC}{\mathcal{C}}
\newcommand{\cD}{\mathcal{D}}
\newcommand{\cI}{\mathcal{I}}
\newcommand{\cJ}{\mathcal{J}}
\newcommand{\cL}{\mathcal{L}}
\newcommand{\cM}{\mathcal{M}}
\newcommand{\cO}{\mathcal{O}}
\newcommand{\fa}{\mathfrak{a}}
\newcommand{\fb}{\mathfrak{b}}
\newcommand{\fc}{\mathfrak{c}}
\newcommand{\ff}{\mathfrak{f}}
\newcommand{\fg}{\mathfrak{g}}
\newcommand{\fI}{\mathfrak{I}}
\newcommand{\fR}{\mathfrak{R}}
\DeclareMathOperator{\Hom}{\mathcal{H}\mathit{om}}
\DeclareMathOperator{\mld}{mld}
\DeclareMathOperator{\ord}{ord}
\DeclareMathOperator{\Supp}{Supp}
\begin{document}
\begin{abstract}
We discuss the ideal-adic semi-continuity problem for minimal log discrepancies by Musta\c{t}\u{a}. We study the purely log terminal case, and prove the semi-continuity of minimal log discrepancies when a Kawamata log terminal triple deforms in the ideal-adic topology.
\end{abstract}

\maketitle

\section*{Introduction}
In the minimal model program, singularities are measured in terms of log discrepancies. The log discrepancy is attached to each divisor on an extraction of the singularity, and their infimum is called the \textit{minimal log discrepancy}. Recently, de Fernex, Ein and Musta\c{t}\u{a} in \cite{dFEM10} after Koll\'ar in \cite{Kl08} proved the ideal-adic semi-continuity of log canonicity effectively to obtain Shokurov's ACC conjecture \cite{S96} for log canonical thresholds on l.c.i.\ varieties. This paper discusses its generalisation to minimal log discrepancies, proposed by Musta\c{t}\u{a}.

\begin{conjecture*}[Musta\c{t}\u{a}]
Let $(X,\Delta)$ be a pair, $Z$ a closed subset of $X$ and $\cI_Z$ its ideal sheaf. Let $\fa$ be an ideal sheaf and $r$ a positive real number. Then there exists an integer $l$ such that\textup{:} if an ideal sheaf $\fb$ satisfies $\fa+\cI_Z^l=\fb+\cI_Z^l$, then
\begin{align*}
\mld_Z(X,\Delta,\fa^r)=\mld_Z(X,\Delta,\fb^r).
\end{align*}
\end{conjecture*}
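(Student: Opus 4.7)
My plan is to reduce the equality to the coincidence of the orders of vanishing of $\fa$ and $\fb$ along a divisor computing each side's minimal log discrepancy, and to choose $l$ large enough that the $\cI_Z$-adic congruence forces this coincidence for every divisor in an a priori bounded family of candidates.

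The reduction is a short valuative calculation. For a prime divisor $E$ over $X$ whose centre lies in $Z$, the identity $a_E(X,\Delta,\fa^r) = a_E(X,\Delta) - r\ord_E(\fa)$ and the inequality $\ord_E(\cI_Z) \ge 1$ imply that, if $\fa + \cI_Z^l = \fb + \cI_Z^l$ and $l \cdot \ord_E(\cI_Z) > \ord_E(\fa)$, then $\ord_E(\fa) = \ord_E(\fb)$ and hence $a_E(X,\Delta,\fa^r) = a_E(X,\Delta,\fb^r)$. The substantive step is therefore to produce, on each side, a divisor $E$ computing the mld together with an a priori bound on the ratio $\ord_E(\fa)/\ord_E(\cI_Z)$, respectively the corresponding ratio for $\fb$, depending only on the initial data $(X,\Delta,Z,\fa,r)$.

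Under the Kawamata log terminal hypothesis on $(X,\Delta,\fa^r)$ this bound should be accessible for a divisor computing $\mld_Z(X,\Delta,\fa^r)$: since $\ord_E(\fa) = (a_E(X,\Delta) - \mld_Z(X,\Delta,\fa^r))/r$, it reduces to a uniform bound on $a_E(X,\Delta)$ for some such $E$, which is the content of the boundedness of computing divisors for klt triples in the spirit of de Fernex--Ein--Musta\c{t}\u{a} and Koll\'ar. Passing through a log resolution of $(X,\Delta,\fa,\cI_Z)$ and a toroidal extraction whose combinatorial data lie in a finite set should produce such an $E$, and this alone yields the one-sided inequality $\mld_Z(X,\Delta,\fa^r) \ge \mld_Z(X,\Delta,\fb^r)$.

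The main obstacle is the reverse inequality, which demands an analogous bound on $\ord_{E'}(\fb)$ along a divisor $E'$ computing $\mld_Z(X,\Delta,\fb^r)$, although $\fb$ is not known in advance. The way to break the deadlock is to show that, once $l$ is sufficiently large, both the KLT property and the boundedness of the computing divisor transfer from $\fa^r$ to $\fb^r$ using only the $\cI_Z$-adic invariants shared by the two ideals. Making this transfer rigorous---so that a single $l$ works uniformly in $\fb$ across the $\cI_Z$-adic neighbourhood of $\fa$---is the heart of the matter, and I anticipate this is precisely where the restriction to the purely log terminal setting, announced in the abstract, becomes essential with current techniques.
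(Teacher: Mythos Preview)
The statement is presented in the paper as an open \emph{conjecture}; there is no proof of it in full generality to compare against. Your easy inequality $\mld_Z(X,\Delta,\fa^r)\ge\mld_Z(X,\Delta,\fb^r)$ is exactly Remark~\ref{rmk:inequality} in the paper, and it needs no klt assumption: fix a single divisor $E$ computing the left side (or with negative log discrepancy in the non-lc case), then choose $l$ with $l\ord_E\cI_Z>\ord_E\fa$. For the reverse inequality your proposal is, as you yourself say, only a sketch of the obstacle. The concrete gap is the step where ``boundedness of the computing divisor transfers from $\fa^r$ to $\fb^r$'': no mechanism is supplied, and appealing to a general boundedness of mld-computing divisors would be circular here, since that is essentially the ACC for minimal log discrepancies, which the paper \emph{derives from} the conjecture (Remark~\ref{rmk:acc}) rather than feeding into it.

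It is instructive to compare with the paper's actual proof in the klt case (Theorem~\ref{thm:klt}), which is short and does \emph{not} attempt to bound a computing divisor for $\fb$. Instead, the klt hypothesis furnishes $t,t'>0$ with $\mld_Z(X,\Delta,\fa^{1+t}\cI_Z^{t'})=0$, and then the already-known $\cI$-adic semi-continuity of \emph{log canonicity} (Theorem~\ref{thm:lct}, the de~Fernex--Ein--Musta\c{t}\u{a}/Koll\'ar result) transfers this to $\mld_Z(X,\Delta,\fb^{1+t}\cI_Z^{t'})=0$. Hence $a_E(X,\Delta,\fb)>t\ord_E\fb$ for \emph{every} $E$ with centre in $Z$; combined with $\ord_E\fb\ge l$ whenever $\ord_E\fa\neq\ord_E\fb$ and with the choice $l\ge t^{-1}\mld_Z(X,\Delta,\fa)$, this forces $a_E(X,\Delta,\fb)>\mld_Z(X,\Delta,\fa)$ for all such $E$. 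So the de~Fernex--Ein--Musta\c{t}\u{a}/Koll\'ar input enters as a threshold-transfer device, not as a source of bounded computing divisors; your outline misidentifies how their work is used.
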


The $\mld$ above denotes the minimal log discrepancy. Musta\c{t}\u{a} observed that the conjecture on formal schemes implies the ACC for minimal log discrepancies on a fixed germ by the argument of generic limits of ideals.

The conjecture is not difficult to prove in the Kawamata log terminal case, stated in Theorem \ref{thm:klt}. It is however inevitable to deal with log canonical singularities in the study of limits. As its first extension, we treat a purely log terminal triple $(X,F+\Delta,\fa^r)$ with a Cartier divisor $F$ and control the minimal log discrepancy of $(X,G+\Delta,\fb^r)$ for $G,\fb$ close to $F,\fa$. Our main theorem compares minimal log discrepancies on $F,G$ rather than those on $X$. We adopt the weaker condition $\fa\approx_l\fb$ defined by $\fa^n+\cI_Z^{nl}=\fb^n+\cI_Z^{nl}$ for some $n$ to reflect the distance of $\fa,\fb$ with allowance of real exponents.

\begin{theorem*}[full form in Theorem \ref{thm:plt}]
$(X,\Delta)$, $Z$, $\fa$ and $r$ as in \textup{Conjecture}. Let $F$ be a reduced Cartier divisor such that $(X,F+\Delta,\fa^r)$ is plt about $Z$. Then there exists an integer $l$ such that\textup{:} if an effective Cartier divisor $G$ and an ideal sheaf $\fb$ satisfy $\cO_X(-F)\approx_l\cO_X(-G)$ and $\fa\approx_l\fb$, then $G$ is reduced about $Z$ and with its normalisation $\nu\colon G^\nu\to G$,
\begin{align*}
\mld_{F\cap Z}(F,\Delta_F,\fa^r\cO_F)=\mld_{\nu^{-1}(G\cap Z)}(G^\nu,\Delta_{G^\nu},\fb^r\cO_{G^\nu}).
\end{align*}
\end{theorem*}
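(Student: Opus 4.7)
The plan is to pass via adjunction from the plt pairs on $X$ to klt pairs on $F$ and $G^\nu$, compare them along a common log resolution, and invoke the klt case (Theorem~\ref{thm:klt}) to close the argument. Fix a log resolution $\pi\colon Y\to X$ of $(X,F+\Delta,\fa\cdot\cI_Z)$, and let $E_1,\dots,E_s$ denote the finitely many prime divisors on $Y$ whose centres in $X$ meet $Z$. The strict transform $\tilde F\subset Y$ is smooth and provides a log resolution of the adjoint klt pair $(F,\Delta_F,\fa^r\cO_F)$ near $F\cap Z$, so Theorem~\ref{thm:klt} is available on that side.

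Choose $l$ large enough that, for every $i$, $l\cdot\ord_{E_i}(\pi^{-1}\cI_Z\cdot\cO_Y)$ strictly exceeds both $\ord_{E_i}(\pi^{*}\cO_X(-F))$ and $\ord_{E_i}(\pi^{-1}\fa\cdot\cO_Y)$. Under this choice, the congruence $\fa^n+\cI_Z^{nl}=\fb^n+\cI_Z^{nl}$ forces $\ord_{E_i}(\pi^{-1}\fb\cdot\cO_Y)=\ord_{E_i}(\pi^{-1}\fa\cdot\cO_Y)$, and similarly $\ord_{E_i}(\pi^{*}\cO_X(-G))=\ord_{E_i}(\pi^{*}\cO_X(-F))$, for every $i$. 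In particular the strict transform $\tilde G$ coincides with $\tilde F$ in a neighbourhood of $\pi^{-1}(Z)$, which forces $G$ to be reduced — indeed normal — about $Z$. Hence $\nu$ is an isomorphism over a neighbourhood of $G\cap Z$, the triple $(X,G+\Delta,\fb^r)$ is plt about $Z$ with the same log resolution $\pi$, and adjunction delivers the klt triple $(G^\nu,\Delta_{G^\nu},\fb^r\cO_{G^\nu})$ whose restriction data on $\tilde F=\tilde G$ matches that of the $F$ side.

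The two klt triples thus become indistinguishable on $\tilde F=\tilde G$ near $\pi^{-1}(Z)$, so every divisorial valuation visible on this common resolution contributes identically to the two log discrepancy computations. Applying Theorem~\ref{thm:klt} to the fixed klt triple $(F,\Delta_F,\fa^r\cO_F)$ supplies an integer $l_0$ beyond which the mld on $F$ is already detected by the divisors realised on $\tilde F$; enlarging $l$ above $l_0$ then yields the claimed equality of mlds on $F$ and on $G^\nu$.

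The main obstacle is the translation step: reading the ideal-adic closeness $\cO_X(-F)\approx_l\cO_X(-G)$ and $\fa\approx_l\fb$ — both given as congruences of $n$th powers modulo $\cI_Z^{nl}$ — as equalities of divisorial orders along the $E_i$, while keeping track of the real exponent $r$ and the factor $n$ in the definition of $\approx_l$. A related delicacy is coordinating the bound on $l$ dictated by $\pi$ with the bound supplied by Theorem~\ref{thm:klt}, so that a single $l$ governs both steps. Finally, reducedness and normality of $G$ about $Z$ have to be extracted from the mere coincidence of strict transforms over $\pi^{-1}(Z)$, which needs some care since $G$ is only given as an effective Cartier divisor on $X$.
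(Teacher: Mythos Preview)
There is a genuine gap. The claim that ``the strict transform $\tilde G$ coincides with $\tilde F$ in a neighbourhood of $\pi^{-1}(Z)$'' does not follow from the equality of orders along the $E_i$, and is false in general. Take $X=\bA^2$, $Z=\{0\}$, $F=\{x=0\}$, $G=\{x+y^N=0\}$: for any fixed $l$ one has $\cO_X(-F)\equiv_l\cO_X(-G)$ once $N\ge l$, and on any log resolution $\pi$ of $(X,F,\cI_Z)$ the orders of $F$ and $G$ along every divisor with centre in $Z$ agree, yet $\tilde F$ and $\tilde G$ are distinct curves and $\pi$ is \emph{not} a log resolution of $(X,G)$. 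Consequently the divisor that computes $\mld_{\nu^{-1}(G\cap Z)}(G^\nu,\Delta_{G^\nu},\fb^r\cO_{G^\nu})$ need not appear on your fixed $\pi$ at all, so the coincidence of data ``visible on this common resolution'' proves nothing about the mld on the $G$-side. Your invocation of Theorem~\ref{thm:klt} is also misplaced: that result compares two $\bR$-ideal sheaves on a \emph{single} variety, whereas here $F$ and $G^\nu$ are different varieties and the whole difficulty is to transport the mld across.

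The paper's proof addresses precisely this. One inequality (Lemma~\ref{lem:easy-plt}) does proceed through a fixed resolution $\bar\varphi$ of the $F$-data, by checking that the divisor $E_F$ computing the mld on $F$ meets the strict transform $\bar G$ in the right way. The reverse inequality (Lemma~\ref{lem:hard-plt}) is the substantial part: one takes an \emph{arbitrary} divisor $E$ on a log resolution of $(X,F+G,\fa\fb\cdots)$ with small log discrepancy on $G^\nu$, uses the plt hypothesis to bound $\ord_E$ of the Jacobian-type ideals $\tilde\cJ'_F,\tilde\cJ'_G,\tilde\cJ_{r,F},\tilde\cJ_{r,G}$ and to show these orders agree for $F$ and $G$ (Lemma~\ref{lem:l2}), invokes Greenberg's theorem to match truncated jet schemes $J_{c_1}F|_Z=J_{c_1}G|_Z$, and then reads off $a_{E|_{G'}}(G^\nu,\fb\cO_{G^\nu})\ge c$ from the motivic-integration description of $\mld$ due to Ein--Musta\c{t}\u{a}--Yasuda. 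This machinery is what substitutes for the unavailable identification $\tilde F=\tilde G$; your outline omits it entirely.
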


The theorem can be regarded as an extension to the case when a variety as well as a boundary deforms, so it would provide a perspective in the study of the behaviour of minimal log discrepancies under deformations. It should be related to Shokurov's reduction \cite{S04} of the termination of flips. One can recover the equality $\mld_Z(X,F+\Delta,\fa^r)=\mld_Z(X,G+\Delta,\fb^r)$ if the precise inversion of adjunction in \cite{K+92} holds on $X$ such as l.c.i.\ varieties in \cite{EM04}, \cite{EMY03}.

We prove the theorem by using motivic integration by Kontsevich in \cite{Kn95} and Denef and Loeser in \cite{DL99}. Take a divisor $E$ on an extraction of $X$ whose restriction computes the minimal log discrepancy on $G$. By the plt assumption, the order of (the inverse image of) the Jacobian $\cJ'_G$ of $G$ along $E$ should be small in contrast to those of $F,G$, then it coincides with that of the Jacobian $\cJ'_F$ of $F$. This provides further the equality of the orders of the ideal sheaves $\cJ_{r,F},\cJ_{r,G}$, and we derive the theorem by the descriptions of minimal log discrepancies involving $\cJ_{r,F},\cJ_{r,G}$ by Ein, Musta\c{t}\u{a} and Yasuda in \cite{EMY03}.

We work over an algebraically closed field $k$ of characteristic zero throughout. $\bZ_{>0},\bZ_{\ge0},\bR_{>0},\bR_{\ge0}$ denote the sets of positive/non-negative, integers/real numbers.

\section{$\cI$-adic semi-continuity problem}
In this section we discuss general aspects of Musta\c{t}\u{a}'s $\cI$-adic semi-continuity problem for minimal log discrepancies.

For the study of limits, we formulate the notion of $\bR$-ideal sheaves by extending that of $\bQ$-ideal sheaves in \cite[Section 2]{K08}. On a scheme $X$ we let $\fR_X$ denote the free semi-group generated by the family $\fI_X$ of all ideal sheaves on $X$, with coefficients in the semi-group $\bR_{\ge0}$. An element of $\fR_X$ is written multiplicatively as $\fa_1^{r_1}\cdots\fa_k^{r_k}$ with $\fa_i\in\fI_X,r_i\in\bR_{\ge0}$. We say that $\fa,\fb\in\fR_X$ are \textit{adhered} if they are written as $\fa=\prod_{ij}\fa_{ij}^{r_im_{ij}}\cdot\cO_X^a\cdot0^{a'},\fb=\prod_{ik}\fb_{ik}^{r_in_{ik}}\cdot\cO_X^b\cdot0^{b'}$ in $\fR_X$ with $\fa_{ij},\fb_{ik}\in\fI_X$, $r_i,a,a',b,b'\in\bR_{\ge0}$, $m_{ij},n_{ik}\in\bZ_{\ge0}$, such that $\prod_j\fa_{ij}^{m_{ij}}$ equals $\prod_k\fb_{ik}^{n_{ik}}$ as ideal sheaves for each $i$, or $a',b'>0$. We say that $\fa,\fb\in\fR_X$ are \textit{equivalent} if there exist $\fc_0,\ldots,\fc_i\in\fR_X$ with $\fc_0=\fa,\fc_i=\fb$ such that each $\fc_{j-1}$ is adhered to $\fc_j$.

\begin{definition}
An \textit{$\bR$-ideal sheaf} on $X$ is an equivalence class of the above relation in $\fR_X$.
\end{definition}

We let $\fI_X^\bR$ denote the family of $\bR$-ideal sheaves on $X$. By an \textit{expression} of $\fa\in\fI_X^\bR$ we mean an element $\fa_1^{r_1}\cdots\fa_k^{r_k}\in\fR_X$ with $\fa_i\in\fI_X,r_i\in\bR_{>0}$ in the class of $\fa$.

\begin{remark}
While some literatures define an $\bR$-ideal sheaf as an element of $\fR_X$, we adopt that of $\fI_X^\bR$ from the viewpoint that for $\fa,\fb\in\fI_X$ one should identify for example the product of $\fa^{\sqrt{2}+1},\fb$ and that of $\fa^{\sqrt{2}},\fa\fb$, which remain different in $\fR_X$.
\end{remark}

\begin{remark}
Two ideal sheaves on a normal variety $X$ have the same order along every divisor if they have the same integral closure. We have an equivalence relation in $\fI_X$ by this. However we will not formulate in this direction, because the relation does not seem to be compatible with the notion of $\cI$-adic topology.
\end{remark}

One can extend the notions of orders and resolutions to $\bR$-ideal sheaves.

\begin{lemma-definition}\label{lem-def:Cartier}
Let $\ff_1^{r_1}\cdots\ff_k^{r_k}$, $\fg_1^{s_l}\cdots\fg_l^{s_l}$ be two expressions of the same $\bR$-ideal sheaf $\fa$ on a normal variety $X$. Suppose $\ff_i=\cO_X(-F_i)$ with a Cartier divisor $F_i$. Then $\fg_j=\cO_X(-G_j)$ with some Cartier divisor $G_j$, and $\sum_ir_iF_i=\sum_js_jG_j$. Such $\fa$ is called a locally principal $\bR$-ideal sheaf. In particular, the notion of resolutions of $\bR$-ideal sheaves makes sense.
\end{lemma-definition}

\begin{proof}
It suffices to prove that if the product $\fa_1\fa_2$ of ideal sheaves $\fa_1,\fa_2$ is locally principal, then so are $\fa_1,\fa_2$ also. Set $\fa_1\fa_2=\cO_X(-F)=f\cO_X$ locally. Then $F$ is decomposed into Weil divisors $F_1,F_2$ as $F=F_1+F_2$ such that $\fa_i\subset\cO_X(-F_i)$. On the other hand, one can write $f=\sum_jf_{1j}f_{2j}$ and $f_{1j}f_{2j}=c_jf$ with $f_{ij}\in\fa_i$, $c_j\in\cO_X$. Thus $1=\sum_jc_j$, so there exists $j$ such that $c_j$ is a unit, that is $f_{1j}f_{2j}\cO_X=\cO_X(-F)$. If we set $f_{ij}\cO_X=:\cO_X(-F'_i)$, then $F_i\le F'_i$ and $F=F_1+F_2=F'_1+F'_2$, so $\fa_i\subset\cO_X(-F_i)=\cO_X(-F'_i)\subset\fa_i$ which means $\fa_i=f_{ij}\cO_X$.
\end{proof}

We introduce the notion of $\cI$-adic topology for $\bR$-ideal sheaves.

\begin{definition}
Fix a closed subscheme $Z$ of a scheme $X$ and let $\cI_Z$ denote its ideal sheaf.
\begin{enumerate}
\item
For $\fa,\fb\in\fI_X$ and $l\in\bZ_{\ge0}$, we write $\fa\equiv_l\fb$ if
\begin{align*}
\fa+\cI_Z^l=\fb+\cI_Z^l.
\end{align*}
\item
For $\fa,\fb\in\fI_X$ and $l\in\bR$, we write $\fa\approx_l\fb$ if there exist $m\in\bZ_{\ge0},n\in\bZ_{>0}$ such that
\begin{align*}
\fa^n\equiv_m\fb^n,\qquad m/n\ge l.
\end{align*}
\item\label{itm:m-adic:sim}
For $\fa,\fb\in\fI_X^\bR$ and $l\in\bR$, we write $\fa\sim_l\fb$ if there exist expressions $\fa=\fa_1^{r_1}\cdots\fa_k^{r_k}$, $\fb=\fb_1^{r_1}\cdots\fb_k^{r_k}$ such that for each $i$
\begin{align*}
\fa_i\approx_{l/r_i}\fb_i.
\end{align*}
\end{enumerate}
\end{definition}

\begin{remark}\label{rmk:sim}
One may replace the condition $\fa_i\approx_{l/r_i}\fb_i$ in (\ref{itm:m-adic:sim}) above with $\fa_i\equiv_{l_i}\fb_i$, $l_i\ge l/r_i$.
\end{remark}

The following basic fact will be used repeatedly.

\begin{remark}\label{rmk:order}
{\itshape If $\fa\sim_l\fb$ and $l\ord_E\cI_Z>\ord_E\fa$ along a divisor $E$ on an extraction, then $\ord_E\fa=\ord_E\fb$}. This follows from the inequality $\ord_E\fa_i\le r_i^{-1}\ord_E\fa<r_i^{-1}l\ord_E\cI_Z\le\ord_E\cI_Z^{l_i}$ in the context $\fa_i+\cI_Z^{l_i}=\fb_i+\cI_Z^{l_i}$ of Remark \ref{rmk:sim}.
\end{remark}

We recall the theory of singularities in the minimal model program. A \textit{pair} $(X,\Delta)$ consists of a normal variety $X$ and a \textit{boundary} $\Delta$, that is an effective $\bR$-divisor such that $K_X+\Delta$ is an $\bR$-Cartier $\bR$-divisor. We treat a \textit{triple} $(X,\Delta,\fa)$ by attaching an $\bR$-ideal sheaf $\fa$. For a prime divisor $E$ on an extraction $\varphi\colon X'\to X$, that is proper and birational, its \textit{log discrepancy} is
\begin{align*}
a_E(X,\Delta,\fa):=1+\ord_E(K_{X'}-\varphi^*(K_X+\Delta))-\ord_E\fa.
\end{align*}
The image $\varphi(E)$ is called its \textit{centre} on $X$. $(X,\Delta,\fa)$ is said to be \textit{log canonical} (\textit{lc}), \textit{purely log terminal} (\textit{plt}), \textit{Kawamata log terminal} (\textit{klt}) respectively if $a_E(X,\Delta,\fa)\ge0$ ($\forall E$), $>0$ ($\forall$exceptional $E$), $>0$ ($\forall E$). For a closed subset $Z$ of $X$, the \textit{minimal log discrepancy}
\begin{align*}
\mld_Z(X,\Delta,\fa)
\end{align*}
over $Z$ is the infimum of $a_E(X,\Delta,\fa)$ for all $E$ with centre in $Z$. The log canonicity of $(X,\Delta,\fa)$ about $Z$ is equivalent to $\mld_Z(X,\Delta,\fa)\ge0$. See \cite[Section 1]{K09}, \cite{KM98} for details.

De Fernex, Ein and Musta\c{t}\u{a} in \cite{dFEM10} after Koll\'ar in \cite{Kl08} proved the $\cI$-adic semi-continuity of log canonicity effectively to obtain with \cite{dFM09} the ACC for log canonical thresholds on l.c.i.\ varieties. We state its direct extension to the case with boundaries here.

\begin{theorem}[{\cite[Theorem 1.4]{dFEM10}}]\label{thm:lct}
Let $(X,\Delta)$ be a pair and $Z$ a closed subset of $X$. Let $\fa$ be an $\bR$-ideal sheaf such that
\begin{align*}
\mld_Z(X,\Delta,\fa)=0.
\end{align*}
Then there exists a real number $l$ such that\textup{:} if an $\bR$-ideal sheaf $\fb$ satisfies $\fa\sim_l\fb$, then
\begin{align*}
\mld_Z(X,\Delta,\fb)=0.
\end{align*}
\end{theorem}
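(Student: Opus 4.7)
The plan is to follow the argument of \cite{dFEM10}, with the new ingredient being to handle the $\sim_l$-relation among $\bR$-ideal sheaves. Take a log resolution $\varphi\colon X'\to X$ of $(X,\Delta,\fa\cdot\cI_Z)$, based on some expression of the $\bR$-ideal $\fa$, and let $E_1,\ldots,E_m$ be the prime divisors on $X'$ with centre in $Z$. Since $\mld_Z(X,\Delta,\fa)=0$ means that $(X,\Delta,\fa)$ is log canonical but not Kawamata log terminal about $Z$, the value $0$ is attained by some log canonical place; after refining $\varphi$ we may assume this place is one of the $E_j$, say $E_{j_0}$, so that $a_{E_{j_0}}(X,\Delta,\fa)=0$. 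Choose
\begin{align*}
l>\max_{1\le j\le m}\frac{\ord_{E_j}\fa}{\ord_{E_j}\cI_Z},
\end{align*}
which is finite since there are finitely many $E_j$ and each satisfies $\ord_{E_j}\cI_Z>0$.

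For any $\fb$ with $\fa\sim_l\fb$, Remark~\ref{rmk:order} applies to each $E_j$ and gives $\ord_{E_j}\fa=\ord_{E_j}\fb$, hence $a_{E_j}(X,\Delta,\fa)=a_{E_j}(X,\Delta,\fb)$. Specialising at $j_0$ yields $a_{E_{j_0}}(X,\Delta,\fb)=0$, and therefore $\mld_Z(X,\Delta,\fb)\le 0$.

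The reverse inequality $\mld_Z(X,\Delta,\fb)\ge 0$---the preservation of log canonicity about $Z$---is the substantive content. The difficulty is that $\fb$ need not be resolved by $\varphi$, and divisors $E$ over $X$ with $\ord_E\fa\ge l\ord_E\cI_Z$ fall outside the scope of Remark~\ref{rmk:order}; moreover the ratio $\ord_E\fa/\ord_E\cI_Z$ can be unbounded as $E$ ranges over arbitrary extractions, even under the log canonicity assumption on $\fa$. The standard remedy, carried out in \cite{dFEM10}, is to control $\ord_E\fb$ against $a_E(X,\Delta,0)$ by combining the bound $\ord_E\fa\le a_E(X,\Delta,0)$ given by log canonicity of $\fa$ with a Brian\c{c}on--Skoda-type consequence of the $\cI_Z$-adic congruence that transfers this bound from $\fa$ to $\fb$, so that $a_E(X,\Delta,\fb)\ge 0$ for every divisor $E$ with centre in $Z$. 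I expect this step to be the main technical obstacle; it is imported essentially from \cite[Theorem 1.4]{dFEM10}, the adaptation to $\bR$-coefficients being enabled by the reformulation of $\sim_l$ provided by Remark~\ref{rmk:sim}.
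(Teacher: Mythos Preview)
Your proposal is correct and matches the paper's treatment: the paper does not prove Theorem~\ref{thm:lct} at all but simply cites it from \cite[Theorem~1.4]{dFEM10}, so there is no in-paper proof to compare against beyond the accompanying Remark~\ref{rmk:lct}. Your split into the trivial direction $\mld_Z(X,\Delta,\fb)\le0$ via Remark~\ref{rmk:order} and the substantive direction $\ge0$ imported from \cite{dFEM10} is exactly the intended reading.

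One small sharpening: you take $l>\max_j\ord_{E_j}\fa/\ord_{E_j}\cI_Z$ over all prime divisors on a fixed log resolution with centre in $Z$, but Remark~\ref{rmk:lct} asserts that a single divisor $E$ with $a_E(X,\Delta,\fa)=0$ already suffices, i.e.\ any $l$ with $l\ord_E\cI_Z>\ord_E\fa$ works for the full statement (both directions). This is how the result is actually invoked later in the paper, e.g.\ in Lemma~\ref{lem:transversal}, where the bound~(\ref{eqn:l1}) is chosen on a fixed log resolution and Theorem~\ref{thm:lct} is applied through Remark~\ref{rmk:lct}. Your larger $l$ is of course harmless, and your remark that the passage to $\bR$-ideal sheaves and the relation $\sim_l$ goes through Remark~\ref{rmk:sim} is the right observation for the extension beyond the literal statement in \cite{dFEM10}.
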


\begin{remark}\label{rmk:lct}
The $l$ is given effectively in terms of a divisor $E$ with centre in $Z$ such that $a_E(X,\Delta,\fa)=0$. One may take an arbitrary $l$ such that $l\ord_E\cI_Z>\ord_E\fa$ by Remark \ref{rmk:order}.
\end{remark}

We will consider its generalisation to minimal log discrepancies, proposed by Musta\c{t}\u{a}.

\begin{conjecture}[Musta\c{t}\u{a}]\label{cnj:mld}
Let $(X,\Delta)$ be a pair and $Z$ a closed subset of $X$. Let $\fa$ be an $\bR$-ideal sheaf. Then there exists a real number $l$ such that\textup{:} if an $\bR$-ideal sheaf $\fb$ satisfies $\fa\sim_l\fb$, then
\begin{align*}
\mld_Z(X,\Delta,\fa)=\mld_Z(X,\Delta,\fb).
\end{align*}
\end{conjecture}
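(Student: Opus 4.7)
The plan is to partition by the sign of $\mld_Z(X,\Delta,\fa)$. If it is negative, fix a witness divisor $E$ with centre in $Z$ and $a_E(X,\Delta,\fa)<0$; choosing $l>\ord_E\fa/\ord_E\cI_Z$ forces $\ord_E\fa=\ord_E\fb$ for every $\fb\sim_l\fa$ by Remark \ref{rmk:order} (applied factorwise to an expression of $\fa$), hence $a_E(X,\Delta,\fb)<0$ as well. The borderline case $\mld_Z(X,\Delta,\fa)=0$ is exactly Theorem \ref{thm:lct}. It therefore suffices to treat the Kawamata log terminal case $\mld_Z(X,\Delta,\fa)>0$, in which the non-klt locus is closed and disjoint from $Z$; after shrinking $X$ one may assume $(X,\Delta,\fa)$ is globally klt.

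For this case I would fix a log resolution $\varphi\colon X'\to X$ of $(X,\Delta,\fa\cdot\cI_Z)$ and list the prime divisors $E_1,\ldots,E_n$ of $X'$ with centre in $Z$, each satisfying $\ord_{E_i}\cI_Z>0$. Choosing $l>\max_i\ord_{E_i}\fa/\ord_{E_i}\cI_Z$ yields $\ord_{E_i}\fa=\ord_{E_i}\fb$ for every $i$ by Remark \ref{rmk:order}. The key technical step is to upgrade this divisor-wise matching to the identity $\fb\cdot\cO_{X'}=\fa\cdot\cO_{X'}$ on a neighbourhood of $\varphi^{-1}(Z)$: locally a generator $f$ of $\fa\cdot\cO_{X'}$ decomposes, via $\fa\subset\fb+\cI_Z^l$, as $f=g+h$ with $g\in\fb\cdot\cO_{X'}$ and $h\in\cI_Z^l\cdot\cO_{X'}$, and the strict inequality on $l$ lets one present $h=fh'$ with $h'$ vanishing at every point of $\varphi^{-1}(Z)$, so $g=f(1-h')$ still generates the principal ideal $\fa\cdot\cO_{X'}$. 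Once this is in hand, $\varphi$ becomes a log resolution of $(X,\Delta,\fb\cdot\cI_Z)$ as well, and the standard klt estimate $a_E\ge\sum a_{E_i}\ord_E E_i$ on the log smooth model identifies $\mld_Z$ with $\min_i a_{E_i}$ for both triples, yielding the desired equality.

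The main difficulty I anticipate is this principalisation step: it relies on strict positivity $l\ord_{E_i}\cI_Z>\ord_{E_i}\fa$ at \emph{every} prime divisor of $X'$ meeting $\varphi^{-1}(Z)$, including those with centre outside $Z$, which requires either a careful local analysis or enlarging the resolution. Beyond the three-case split Conjecture \ref{cnj:mld} itself admits no further cases, but its genuine refinement --- the deformation of pairs in the paper's main theorem, where one compares minimal log discrepancies on adjoint divisors $F$ and $G$ rather than on $X$ --- is not reachable by this elementary trick, and the paper replaces it with the motivic-integration description of the Jacobian ideals $\cJ_{r,F}$ of Ein--Musta\c{t}\u{a}--Yasuda.
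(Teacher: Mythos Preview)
The statement is Conjecture \ref{cnj:mld}, which the paper does \emph{not} prove; it is presented as an open problem. The paper establishes only the special cases $\mld_Z=0$ (Theorem \ref{thm:lct}) and $(X,\Delta,\fa)$ klt (Theorem \ref{thm:klt}), together with the plt comparison Theorem \ref{thm:plt}. Your proposal claims a full proof via a three-case split, but the split is not exhaustive.

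The error is the assertion that $\mld_Z(X,\Delta,\fa)>0$ forces the non-klt locus to be disjoint from $Z$. Take $X=\bA^2$, $\Delta=0$, $\fa=x\cO_X$, $Z=\{o\}$. The divisor $\{x=0\}$ has log discrepancy $0$, so the non-klt locus is $\{x=0\}\ni o$; yet every divisor with centre exactly $o$ has log discrepancy at least $1$, whence $\mld_o(X,\fa)=1>0$. Thus you cannot shrink $X$ to make the triple klt, and the residual situation ``$\mld_Z>0$ but the triple is not klt in any neighbourhood of $Z$'' is precisely the open content of the conjecture that the paper leaves unresolved (and which motivates the separate Theorem \ref{thm:plt}).

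Even restricted to the genuinely klt case, your principalisation step has the gap you yourself flag, and enlarging the resolution does not close it: a component of the zero locus of $\fa\cO_{X'}$ whose centre lies outside $Z$ may meet $\varphi^{-1}(Z)$, and along it $\ord\cI_Z=0$ while $\ord\fa>0$, so $h\in\cI_Z^l\cO_{X'}$ cannot be absorbed as $h=fh'$. The paper's proof of Theorem \ref{thm:klt} avoids resolving $\fb$ altogether: fix $t,t'>0$ with $\mld_Z(X,\Delta,\fa^{1+t}\cI_Z^{t'})=0$, apply Theorem \ref{thm:lct} to this auxiliary triple to obtain $\mld_Z(X,\Delta,\fb^{1+t}\cI_Z^{t'})=0$, hence $a_E(X,\Delta,\fb)>t\ord_E\fb$ for every $E$ with centre in $Z$; then any $E$ with $\ord_E\fa\neq\ord_E\fb$ has $\ord_E\fb\ge l\ge t^{-1}\mld_Z(X,\Delta,\fa)$ by Remark \ref{rmk:order}, so $a_E(X,\Delta,\fb)>\mld_Z(X,\Delta,\fa)$ regardless of whether $\varphi$ resolves $\fb$.
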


This conjecture is related to Shokurov's ACC conjecture \cite{S88}, \cite[Conjecture 4.2]{S96} for minimal log discrepancies. In fact, Conjecture \ref{cnj:mld} has originated in Musta\c{t}\u{a}'s following observation parallel to \cite{dFEM10} by generic limits of ideals.

\begin{remark}[Musta\c{t}\u{a}]\label{rmk:acc}
{\itshape If Conjecture \textup{\ref{cnj:mld}} holds on formal schemes, then for a fixed pair $(X,\Delta)$, a closed point $x$ and a set $R$ of positive real numbers which satisfies the descending chain condition, the set
\begin{align*}
\{\mld_x(X,\Delta,\fa_1^{r_1}\cdots\fa_k^{r_k})\mid\fa_i\in\fI_X,r_i\in R\}
\end{align*}
satisfies the ascending chain condition.}

Indeed, we shall prove the stability of an arbitrary non-decreasing sequence of elements $c_i=\mld_x(X,\Delta,\fa_{i1}^{r_{i1}}\cdots\fa_{ik_i}^{r_{ik_i}})\ge0$. We may assume that $\fa_{ij}$ are non-trivial at $x$, then for a fixed divisor $F$ with centre $x$ we have $\sum_jr_{ij}\le\sum_jr_{ij}\ord_F\fa_{ij}\le a_F(X,\Delta)$. $R$ has its minimum $r$ say, whence $k_i\le r^{-1}a_F(X,\Delta)$. Thus by replacing with a subsequence, we may assume the constancy $k=k_i$. Further we may assume that $r_{ij}$ form a non-decreasing sequence for each $j$. Then $r_{ij}$ have a limit $r_j$ by $r_{ij}\le a_F(X,\Delta)$.

Take generic limits $\fa_j$ of $\fa_{ij}$ following \cite[Section 4]{dFEM10}, \cite{Kl08}. After extending the ground field $k$, we have $\fa_j$ on the completion $(\hat{X},\hat{\Delta})$ of $(X,\Delta)$ at $x$. Conjecture \ref{cnj:mld} on $(\hat{X},\hat{\Delta})$ provides an integer $i_0$ and a divisor $E$ on $X$ with centre $x$ such that for $i\ge i_0$, $\ord_{\hat{E}}\fa_j=\ord_E\fa_{ij}$ and
\begin{align*}
c:=\mld_{\hat{x}}(\hat{X},\hat{\Delta},\fa_1^{r_1}\cdots\fa_k^{r_k})&=a_{\hat{E}}(\hat{X},\hat{\Delta},\fa_1^{r_1}\cdots\fa_k^{r_k})\\
&=a_E(X,\Delta,\fa_{i1}^{r_1}\cdots\fa_{ik}^{r_k})=\mld_x(X,\Delta,\fa_{i1}^{r_1}\cdots\fa_{ik}^{r_k})\le c_i,
\end{align*}
with $\hat{x}:=x\times_X\hat{X}$, $\hat{E}:=E\times_X\hat{X}$. Hence
\begin{align*}
c\le c_i\le a_E(X,\Delta,\fa_{i1}^{r_{i1}}\cdots\fa_{ik}^{r_{ik}})=c+\sum_j(r_j-r_{ij})\ord_{\hat{E}}\fa_j,
\end{align*}
and its right-hand side converges to $c$. Thus $c_i=c$ for $i\ge i_0$.
\end{remark}

We expect an effective form of Conjecture \ref{cnj:mld}, but the naive generalisation of Remark \ref{rmk:lct} never holds.

\begin{remark-example}
Set $X=\bA^2$ with coordinates $x,y$ and $\fa=(x^2+y^3)\cO_X$, $\fb=x^2\cO_X$. The pair $(X,\fa^{2/3})$ has minimal log discrepancy $2/3=a_E(X,\fa^{2/3})$ over the origin $o$, computed by the divisor $E$ obtained by the blow-up at $o$. We have $\fa+\cI_o^3=\fb+\cI_o^3$ and $\ord_E\fa=2<3$, but $(X,\fb^{2/3})$ is not log canonical.
\end{remark-example}

We provide a few reductions of the conjecture.

\begin{remark}\label{rmk:inequality}
One inequality $\mld_Z(X,\Delta,\fa)\ge\mld_Z(X,\Delta,\fb)$ is obvious. For, take a divisor $E$ with centre in $Z$ such that $a_E(X,\Delta,\fa)=\mld_Z(X,\Delta,\fa)$, or negative in the non-lc case, and $l$ such that $l\ord_E\cI_Z>\ord_E\fa$ by Remark \ref{rmk:order}.
\end{remark}

\begin{remark}\label{rmk:BCHM}
Conjecture \ref{cnj:mld} is reduced to the case when {\itshape $X$ has $\bQ$-factorial terminal singularities, $\Delta$ is zero and $Z$ is irreducible}. Indeed, by \cite{BCHM10} one can construct an extraction $\varphi\colon X'\to X$ such that $X'$ has $\bQ$-factorial terminal singularities with effective $\Delta'$ defined by $K_{X'}+\Delta'=\varphi^*(K_X+\Delta)$. Then $\mld_Z(X,\Delta,\fa)=\mld_{\varphi^{-1}(Z)}(X',\Delta',\fa\cO_{X'})$, so the conjecture is reduced to that on $X'$. Further, we may assume $\Delta=0$ by forcing $\fa$ to absorb $\Delta$. It is obviously permissible to assume the irreducibility of $Z$.
\end{remark}

\begin{remark}
Mostly, we need just a weaker form of Conjecture \ref{cnj:mld} in which {\itshape an expression $\fa_1^{r_1}\cdots\fa_k^{r_k}$ of $\fa$ is fixed and only those $\fb=\fb_1^{r_1/n_1}\cdots\fb_k^{r_k/n_k}$ with $\fa_i^{n_i}\equiv_{l_i}\fb_i$, $l_i\ge ln_i/r_i$ are considered}. This is reduced to the case when {\itshape $\fa_i,\fb_i$ are locally principal $\bR$-ideal sheaves}. Indeed, after replacing $\fa_i^{r_i}$ with the $s$-uple of $\fa_i^{r_i/s}$ for some $s$, we may assume that $\mld_Z(X,\Delta,\fa)$ equals $\mld_Z(X,\Delta,\ff)$ locally for some $\ff=\prod_i(f_i\cO_X)^{r_i}$ with $f_i\in\fa_i$. By $\fa_i^{n_i}\equiv_{l_i}\fb_i$ one can write $f_i^{n_i}=g_i+h_i$ with $g_i\in\fb_i$, $h_i\in\cI_Z^{l_i}$, so $f_i^{n_i}\cO_X\equiv_{l_i}g_i\cO_X$. For $\fg=\prod_i(g_i\cO_X)^{r_i/n_i}$ the weaker conjecture for locally principal $\bR$-ideal sheaves provides
\begin{align*}
\mld_Z(X,\Delta,\fa)=\mld_Z(X,\Delta,\ff)=\mld_Z(X,\Delta,\fg)\le\mld_Z(X,\Delta,\fb),
\end{align*}
and we have the equality by Remark \ref{rmk:inequality}.
\end{remark}

In the klt case, it is not difficult to prove our conjecture.

\begin{theorem}\label{thm:klt}
Conjecture \textup{\ref{cnj:mld}} holds for a klt triple $(X,\Delta,\fa)$.
\end{theorem}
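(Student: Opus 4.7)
By Remark~\ref{rmk:BCHM} the problem reduces to the case $X$ $\bQ$-factorial terminal, $\Delta=0$, and $Z$ irreducible; set $\epsilon:=\mld_Z(X,\fa)>0$. Take a log resolution $\varphi\colon Y\to X$ of $(X,\fa\cdot\cI_Z)$, so that $\fa\cO_Y=\cO_Y(-F_\fa)$, $\cI_Z\cO_Y=\cO_Y(-F_Z)$, and these together with the exceptional locus form an SNC divisor $\sum_i E_i$. Write $a_i:=a_{E_i}(X)$, $c_i:=\ord_{E_i}\fa$, $z_i:=\ord_{E_i}\cI_Z$, and $I_Z:=\{i:z_i>0\}$. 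The klt hypothesis gives $a_j-c_j>0$ for every $j$, and the standard stratum blow-up computation (each divisor exceptional over $\bigcap_{j\in J}E_j$ has log discrepancy a positive-integer combination of the $a_j-c_j$) delivers $\epsilon=\min_{i\in I_Z}(a_i-c_i)$, and more generally $a_F(X,\fa)\ge\epsilon$ for every divisor $F$ over $X$ with centre in $Z$.

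Choose $l>\max_{i\in I_Z}c_i/z_i$. Given $\fb$ with $\fa\sim_l\fb$, Remark~\ref{rmk:order} applied to each $E_i$ with $i\in I_Z$ yields $\ord_{E_i}\fb=c_i$ and hence $a_{E_i}(X,\fb)=a_i-c_i$. Together with Remark~\ref{rmk:inequality} this already gives the easy inequality $\mld_Z(X,\fb)\le\epsilon$.

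For the reverse inequality $\mld_Z(X,\fb)\ge\epsilon$, it suffices to show $a_F(X,\fb)\ge\epsilon$ for every divisor $F$ over $X$ with centre in $Z$. When $\ord_F\fa<l\ord_F\cI_Z$, Remark~\ref{rmk:order} again produces $\ord_F\fa=\ord_F\fb$, hence $a_F(X,\fb)=a_F(X,\fa)\ge\epsilon$. The residual case $\ord_F\fa\ge l\ord_F\cI_Z$ is the main obstacle, since here Remark~\ref{rmk:order} no longer pins down $\ord_F\fb$, which need not be bounded above by $\ord_F\fa$ from the $\sim_l$ condition alone. To finish, the plan is to reduce to Theorem~\ref{thm:lct} via an auxiliary $\bR$-ideal sheaf $\fh$ engineered so that $(X,\fa\cdot\fh)$ is log canonical about $Z$ with $\mld_Z=0$ while $\ord_E\fh\ge\epsilon$ for every $E$ with centre in $Z$. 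Then $\fa\sim_l\fb$ implies $\fa\cdot\fh\sim_l\fb\cdot\fh$, and Theorem~\ref{thm:lct} (after enlarging $l$ by the constant it supplies) gives the log canonicity of $(X,\fb\cdot\fh)$ about $Z$, that is $a_F(X,\fb)\ge\ord_F\fh\ge\epsilon$.

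The technical core is the construction of $\fh$. Since $a_E(X,\fa)\ge\epsilon$ for every $E$ with centre in $Z$, the prescribed range $[\epsilon,a_E(X,\fa)]$ for $\ord_E\fh$ is always non-empty; the challenge is to realise such orders by a single $\bR$-ideal sheaf on $X$. A natural candidate is a fractional power of the pushforward $\varphi_*\cO_Y(-N\sum_{i\in I_Z}E_i)$ for a sufficiently divisible $N$, the flexibility of $\bR$-coefficients ensuring enough freedom to match the combinatorial data $(a_i,c_i,z_i)$ on $Y$; the finiteness of this data is what makes the klt case accessible without recourse to the motivic integration machinery invoked for the plt extension.
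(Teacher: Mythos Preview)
Your reduction, the case split on whether $\ord_F\fa<l\,\ord_F\cI_Z$, and the appeal to Theorem~\ref{thm:lct} are all on the right track and match the skeleton of the paper's argument. The gap is in the residual case: you never actually construct the auxiliary $\fh$, and the requirements you impose on it are stronger than what is needed and not obviously satisfiable. You ask for $\ord_E\fh\ge\epsilon$ for \emph{every} divisor $E$ with centre in $Z$ while simultaneously keeping $(X,\fa\fh)$ log canonical; but already the simplest candidate $\fh=\cI_Z^{\lambda}$, with $\lambda$ the log canonical threshold of $\cI_Z$ relative to $(X,\fa)$, only gives $\ord_E\fh\ge\lambda$, and $\lambda\le\epsilon$ with strict inequality in easy examples (e.g.\ $X=\bA^2$, $\fa=\cO_X$, $\cI_Z=(x,y^2)$, where $\epsilon=2$ but $\lambda=3/2$). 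Your suggested pushforward $\varphi_*\cO_Y(-N\sum_{i\in I_Z}E_i)$ is not analysed: you would need to control $\ord_E$ of this ideal for \emph{all} $E$ over $Z$, not just the $E_i$ on the fixed resolution, and you have not done so. As written the proof is incomplete.

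The paper closes this gap by a different, and simpler, choice of auxiliary. Rather than a fixed $\fh$, it uses $\fa^{t}\cI_Z^{t'}$ with $t,t'>0$ chosen so that $\mld_Z(X,\Delta,\fa^{1+t}\cI_Z^{t'})=0$, which is possible precisely by the klt hypothesis. The point is that under $\fa\sim_l\fb$ this auxiliary is \emph{not} fixed: one obtains (via Theorem~\ref{thm:lct}, after enlarging $l$ and also imposing $l\ge t^{-1}\epsilon$) that $\mld_Z(X,\Delta,\fb^{1+t}\cI_Z^{t'})=0$, whence $a_E(X,\Delta,\fb)>t\,\ord_E\fb$ for every $E$ over $Z$. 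In your residual case the symmetric form of Remark~\ref{rmk:order} gives $\ord_E\fb\ge l\,\ord_E\cI_Z\ge l$, so $a_E(X,\Delta,\fb)>tl\ge\epsilon$. The trick is that the auxiliary involves $\fa$ itself and therefore becomes $\fb$-dependent after the transfer; the desired lower bound then comes from $t\,\ord_E\fb$, which is automatically large exactly in the residual case. This removes any need to realise a uniform bound $\ord_E\fh\ge\epsilon$ by a single ideal on $X$.
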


\begin{proof}
It suffices to prove $\mld_Z(X,\Delta,\fa)\le\mld_Z(X,\Delta,\fb)$ by Remark \ref{rmk:inequality}. As $(X,\Delta,\fa)$ is klt, we can fix $t,t'>0$ such that $\mld_Z(X,\Delta,\fa^{1+t}\cI_Z^{t'})=0$. Then by Theorem \ref{thm:lct} there exists
\begin{align*}
l\ge t^{-1}\mld_Z(X,\Delta,\fa)
\end{align*}
such that $\fa\sim_l\fb$ implies $\mld_Z(X,\Delta,\fb^{1+t}\cI_Z^{t'})=0$. Thus every divisor $E$ with centre in $Z$ satisfies
\begin{align*}
a_E(X,\Delta,\fb)>t\ord_E\fb.
\end{align*}
Suppose $a_E(X,\Delta,\fa)\neq a_E(X,\Delta,\fb)$, equivalently $\ord_E\fa\neq\ord_E\fb$. Then by Remark \ref{rmk:order},
\begin{align*}
\ord_E\fb\ge l\ord_E\cI_Z\ge l.
\end{align*}
The above three inequalities give $a_E(X,\Delta,\fb)>\mld_Z(X,\Delta,\fa)$, which completes the theorem.
\end{proof}

Even if we start with klt singularities, it is inevitable to deal with log canonical singularities in the study of limits of them.

\begin{example}
Set $X=\bA^2$ with coordinates $x,y$ and $\fa_n=x(x+y^n)\cO_X$. The limit of these $\fa_n$ is $\fa_\infty=x^2\cO_X$, so that of klt pairs $(X,\fa_n^{1/2})$ is a plt pair $(X,\fa_\infty^{1/2})=(X,x\cO_X)$.
\end{example}

It is standard to reduce to lower dimensions by the restriction of pairs to subvarieties. For a pair $(X,G+\Delta)$ such that $G$ is a reduced divisor which has no component in the support of effective $\Delta$, one can construct the \textit{different} $\Delta_{G^\nu}$ on its normalisation $\nu\colon G^\nu\to G$ as in \cite[Chapter 16]{K+92}, \cite[\S 3]{S92}. It is a boundary which satisfies the equality $K_{G^\nu}+\Delta_{G^\nu}=\nu^*((K_X+G+\Delta)|_G)$.

As the first extension of Theorem \ref{thm:klt}, we study the plt case in which the boundary involves a Cartier divisor $F$. Let $F$ be a Cartier divisor on a triple $(X,\Delta,\fa)$ such that $(X,F+\Delta,\fa)$ is plt. Then $F$ is normal by the connectedness lemma \cite[17.4 Theorem]{K+92}, \cite[5.7]{S92}, and the induced triple $(F,\Delta_F,\fa\cO_F)$ is klt. In this setting, we control $\mld_Z(X,G+\Delta,\fb)$ for $G,\fb$ close to $F,\fa$. We adopt the notation
\begin{align*}
F\sim_lG
\end{align*}
for the condition $\cO_X(-F)\sim_l\cO_X(-G)$, and $(F,\fa)\sim_l(G,\fb)$ for $F\sim_lG$, $\fa\sim_l\fb$. We compare minimal log discrepancies on $F,G$ rather than those on $X$, so $G$ should be a divisor of the following type.

\begin{definition}
A \textit{transversal} divisor on a triple $(X,\Delta,\fb)$ is a reduced Cartier divisor which has no component in the support of $\Delta$ or the zero locus of $\fb$.
\end{definition}

For example, an effective Cartier divisor $G$ is transversal if $(X,G+\Delta,\fb)$ is log canonical.

We state our theorem in the plt case, which will be proved in Section \ref{sec:plt}.

\begin{theorem}\label{thm:plt}
Let $(X,\Delta)$ be a pair and $Z$ a closed subset of $X$. Let $F$ be a reduced Cartier divisor and $\fa$ an $\bR$-ideal sheaf such that $(X,F+\Delta,\fa)$ is plt about $Z$. Then there exists a real number $l$ such that\textup{:} if an effective Cartier divisor $G$ and an $\bR$-ideal sheaf $\fb$ satisfy $(F,\fa)\sim_l(G,\fb)$, then $G$ is transversal on $(X,\Delta,\fb)$ about $Z$ and
\begin{align*}
\mld_{F\cap Z}(F,\Delta_F,\fa\cO_F)=\mld_{\nu^{-1}(G\cap Z)}(G^\nu,\Delta_{G^\nu},\fb\cO_{G^\nu}).
\end{align*}
\end{theorem}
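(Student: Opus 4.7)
The plan is to combine the effective semi-continuity of log canonicity in Theorem \ref{thm:lct} with the motivic-integration description of minimal log discrepancies of Ein--Musta\c{t}\u{a}--Yasuda.

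First I would deduce that $G$ is transversal. Since $(X,F+\Delta,\fa)$ is plt and $F$ has coefficient one, one has $a_F(X,F+\Delta,\fa)=0$, so $\mld_{F\cap Z}(X,F+\Delta,\fa)=0$ provided $F\cap Z\neq\emptyset$ (the empty case being vacuous). Theorem \ref{thm:lct} applied to this situation, for $l$ large enough, then yields $\mld_{F\cap Z}(X,G+\Delta,\fb)=0$, so $(X,G+\Delta,\fb)$ is lc in a neighbourhood of $F\cap Z$. In particular $G$ is reduced there and shares no component with $\Supp\Delta$ or with the zero locus of $\fb$, i.e.\ $G$ is transversal on $(X,\Delta,\fb)$ about $Z$.

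Next I would fix a common extraction $\varphi\colon X'\to X$ log-resolving $(X,F+\Delta,\fa)$ such that the strict transform $F':=\varphi^{-1}_*F$ is smooth (possible by plt). Following \cite{EMY03}, one encodes the two minimal log discrepancies in ideal sheaves $\cJ_{r,F},\cJ_{r,G}$ on $X'$ assembled from $\fa^r,\fb^r$ and the Jacobians $\cJ'_F,\cJ'_G$ of $F,G$, in the form of an infimum, over divisors $E$ with appropriate centre, of $a_E(X,\Delta)+1-\ord_E\cJ_{r,F}$ and its analogue for $G$.

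The heart of the argument is the order comparison. Choose a divisor $E$ realising (or nearly realising) the infimum on the $G$-side. The plt assumption on $(X,F+\Delta,\fa)$, transferred to $G,\fb$ through the lc property above, bounds $\ord_E\cO_X(-F)$, $\ord_E\cO_X(-G)$, $\ord_E\fa$, $\ord_E\fb$, $\ord_E\cJ'_F$ and $\ord_E\cJ'_G$ uniformly in $(G,\fb)$. Choosing $l$ larger than all these bounds divided by $\ord_E\cI_Z$, Remark \ref{rmk:order} gives $\ord_E\cO_X(-F)=\ord_E\cO_X(-G)$ and $\ord_E\fa=\ord_E\fb$; the equality $\ord_E\cJ'_F=\ord_E\cJ'_G$ of Jacobian orders then follows since each such Jacobian depends only on the defining ideal of the divisor modulo a sufficiently high power of $\cI_Z$. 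Hence $\ord_E\cJ_{r,F}=\ord_E\cJ_{r,G}$, and the two EMY formulas give one inequality of minimal log discrepancies; the reverse follows by symmetry after exchanging the roles of the two sides.

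The main obstacle is uniformity: the minimising $E$ depends on $(G,\fb)$, so $l$ must be fixed before $E$ is known. This is precisely where the plt hypothesis does its essential work, yielding a priori bounds on $\ord_E$ along any competitor $E$ purely in terms of the initial triple $(X,F+\Delta,\fa)$, so that a single $l$ suffices across all admissible $(G,\fb)$.
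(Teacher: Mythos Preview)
Your outline has the right ingredients but several steps do not go through as stated.

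First, the transversality argument is incorrect. From $a_F(X,F+\Delta,\fa)=0$ you cannot conclude $\mld_{F\cap Z}(X,F+\Delta,\fa)=0$: the centre of $F$ is $F$ itself, not a subset of $F\cap Z$, so $F$ does not witness this mld. Indeed for $(\bA^2,\text{line})$ the mld at the origin is $1$, not $0$. The paper instead adds auxiliary factors $\fa^t\tilde{\cJ}'_F{}^{rt}\cD'_X{}^t\cI_Z^{t'}$ to force the mld down to zero before invoking Theorem~\ref{thm:lct}; these extra factors are also what later yield the uniform bounds on $\ord_E\fb$, $\ord_E\tilde{\cJ}'_F$, $\ord_E\cD'_X$ for the competing $E$.

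Second, the Jacobian comparison is more delicate than ``depends only on the defining ideal modulo a high power of $\cI_Z$''. The hypothesis $F\approx_lG$ gives only $f^n\equiv u g^n\bmod\cI_Z^{nl}$ for some $n$ and unit $u$; this says nothing directly about $\partial f/\partial x_j$ versus $\partial g/\partial x_j$. The paper passes to an \'etale cover adjoining an $n$-th root of $u$ to factor $f^n-ug^n$, and uses Zariski's subspace theorem to get one linear factor $f-\mu^iyg$ lying in $\cI_Z^{l_1}$, from which the Jacobian equality follows.

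Third, and most fundamentally, equality of orders along a divisor $E$ on an extraction of $X$ does not by itself compare $\mld$ on $F$ with $\mld$ on $G^\nu$: these live on different varieties, and an $E$ meeting $G'$ transversally need not meet the strict transform of $F$ at all. The paper bridges this asymmetrically. For $\mld_F\ge\mld_{G^\nu}$ it takes a fixed $E_F$ meeting $\bar F$ and shows geometrically that $\bar F\cap E_F=\bar G\cap E_F$ scheme-theoretically at the generic point. For $\mld_F\le\mld_{G^\nu}$ it cannot argue symmetrically (your proposed symmetry would require choosing $l$ after $(G,\fb)$, contradicting the statement); instead it uses arc spaces: $F\approx_{l_3}G$ forces $J_{c_2}F|_Z=J_{c_2}G|_Z$ inside $J_{c_2}X$, so a cylinder $T$ of arcs on $G$ determined by $E$ has the same $c_1$-truncation as a set $S$ of arcs on $F$, and the motivic-integration formula for $\mld$ on $F$ then gives the bound. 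This arc-space transfer is the missing idea in your sketch.
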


Theorem \ref{thm:plt} compares minimal log discrepancies on different varieties, so it would provide a perspective in the study of their behaviour under deformations. One can interpret it as an extension of Theorem \ref{thm:klt} to the case when a variety as well as a boundary deforms. Theorem \ref{thm:plt} is also joined with Conjecture \ref{cnj:mld} via the \textit{precise inversion of adjunction} in \cite[Chapter 17]{K+92}.

\begin{conjecture}[precise inversion of adjunction]
Let $(X,G+\Delta)$ be a pair such that $G$ is a reduced divisor which has no component in the support of effective $\Delta$, and $Z$ a closed subset of $G$. Let $\Delta_{G^\nu}$ be the different on the normalisation $\nu\colon G^\nu\to G$. Then 
\begin{align*}
\mld_Z(X,G+\Delta)=\mld_{\nu^{-1}(Z)}(G,\Delta_{G^\nu}).
\end{align*}
\end{conjecture}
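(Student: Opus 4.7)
The plan is to establish the two inequalities separately. The direction $\mld_Z(X,G+\Delta) \leq \mld_{\nu^{-1}(Z)}(G^\nu,\Delta_{G^\nu})$ should follow from standard adjunction in the spirit of Kawakita's inversion of adjunction. Concretely, given a divisor $E'$ over $G^\nu$ with centre in $\nu^{-1}(Z)$ realising or nearly realising the right-hand side, I would take a common log resolution $\pi\colon X' \to X$ of $(X,G+\Delta)$ such that the induced map $\bar\pi\colon G' := \pi_*^{-1} G \to G^\nu$ factors through a log resolution of $G^\nu$ on which $E'$ already appears. The adjunction identity
\begin{align*}
\bigl(K_{X'} + G' - \pi^*(K_X + G + \Delta)\bigr)|_{G'} = K_{G'} - \bar\pi^*(K_{G^\nu} + \Delta_{G^\nu})
\end{align*}
relates the discrepancies on $X$ to those on $G^\nu$, and a sequence of weighted blow-ups of $X'$ along the codimension-one stratum $E' \subset G'$ should produce a divisor $F$ over $X$ with centre in $Z$ and $a_F(X, G+\Delta) = a_{E'}(G^\nu, \Delta_{G^\nu})$.

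The reverse inequality is the substantive content of the conjecture. The strategy is to take a divisor $F$ over $X$ centred in $Z$ realising $\mld_Z(X, G+\Delta)$ and construct a divisor $E'$ over $G^\nu$ with centre in $\nu^{-1}(Z)$ and $a_{E'}(G^\nu, \Delta_{G^\nu}) \leq a_F(X, G+\Delta)$. Following Ein-Musta\c{t}\u{a}-Yasuda, one would interpret the log discrepancy via motivic integration: $a_F(X, G+\Delta)$ corresponds to the codimension of a contact locus in the arc space $X_\infty$, corrected by the Jacobian of a resolution and the order along $G + \Delta$. Restrict to the subspace of arcs factoring through $G$ (non-empty once $\mld_Z(X,G+\Delta) < 1$, by the connectedness theorem forcing the centre of $F$ to meet the strict transform of $G$) and apply the change-of-variables formula to match this codimension with a contact locus on the arc space of $G^\nu$.

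The principal obstacle is exactly this second direction: it is Shokurov's open conjecture of precise inversion of adjunction. The change-of-variables step demands control of the Jacobian of the embedding $G^\nu \hookrightarrow X$, and the discrepancy between the Jacobian ideal of $G$ and the conductor of the normalisation enters the count. When $X$ is smooth or l.c.i.\ these corrections cancel, as in \cite{EM04}, \cite{EMY03}; in the general normal $\bQ$-Gorenstein case the index of $K_X + G$ together with the failure of $G$ to be Cartier on the non-Cartier locus of $X$ obstructs a direct motivic argument. One natural route in the spirit of the present paper would be to use Theorem \ref{thm:plt} to deform $(X,G+\Delta)$ $\cI$-adically to a plt situation in which the restricted mlds already match; the hard part will be constructing such a deformation preserving both $\mld_Z(X, G+\Delta)$ and $\mld_{\nu^{-1}(Z)}(G^\nu, \Delta_{G^\nu})$ simultaneously.
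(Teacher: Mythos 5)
This statement is presented in the paper as a \emph{conjecture}, not a theorem: it is Shokurov's precise inversion of adjunction, which the paper explicitly leaves open and uses only as a \emph{hypothesis} in the corollary that follows (``Suppose that the precise inversion of adjunction holds on $X$''). The paper offers no proof of it, only citations to \cite{EM04}, \cite{EMY03} for the cases $X$ smooth or l.c.i.\ where it is known. You have correctly identified this: your sketch of the easy inequality $\mld_Z(X,G+\Delta)\le\mld_{\nu^{-1}(Z)}(G^\nu,\Delta_{G^\nu})$ by extending a log resolution of $G^\nu$ to an embedded resolution of $X$ and pulling the divisor $E'$ out of $G'$ by a blow-up is the standard adjunction argument, and your identification of the reverse inequality as the genuine open content matches the state of the art. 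Your proposal to attack the hard direction via a motivic change-of-variables runs into exactly the obstruction you name --- the comparison of the Jacobian of $G$, the conductor of $\nu$, and the index of $K_X+G$ has no cancellation outside the l.c.i.\ case, and this is where the Ein--Musta\c{t}\u{a}--Yasuda argument stops. Your speculative route through Theorem \ref{thm:plt} is circular as stated: the corollary in the paper \emph{assumes} precise inversion of adjunction in order to transfer the conclusion of Theorem \ref{thm:plt} from $G^\nu$ back to $X$, so one cannot use that corollary to produce the deformation you want without already knowing the conjecture. In short, there is no gap in your understanding --- you have correctly diagnosed that no proof exists --- but you should be explicit that nothing in the paper, including Theorem \ref{thm:plt}, provides a foothold toward the reverse inequality in the general normal $\bQ$-Gorenstein case.
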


The equality of minimal log discrepancies on $X$ follows if the precise inversion of adjunction holds on $X$, such as l.c.i.\ varieties in \cite{EM04}, \cite{EMY03}.

\begin{corollary}
$(X,\Delta,\fa)$, $Z$ and $F$ as in Theorem \textup{\ref{thm:plt}}. Suppose that the precise inversion of adjunction holds on $X$. Then there exists a real number $l$ such that\textup{:} if effective Cartier divisors $G_i$ and an $\bR$-ideal sheaf $\fb$ satisfy $F\sim_lG_i$, $\fa\sim_l\fb$, then for $G=\sum_ig_iG_i$ with $1=\sum_ig_i$, $g_i\in\bR_{\ge0}$,
\begin{align*}
\mld_Z(X,F+\Delta,\fa)=\mld_Z(X,G+\Delta,\fb).
\end{align*}
\end{corollary}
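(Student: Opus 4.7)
The plan is to combine Theorem \ref{thm:plt} with the precise inversion of adjunction on $X$ to handle each reduced divisor $G_i$ separately, and then to exploit the linearity of the log discrepancy in the boundary to pass to the convex combination $G=\sum_ig_iG_i$.

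I would first fix a divisor $E^*$ over $X$ with centre in $Z$ that attains $c:=\mld_Z(X,F+\Delta,\fa)$, and then choose $l$ large enough to exceed the threshold of Theorem \ref{thm:plt} applied to $(F,\fa)$ and also to satisfy both $l\cdot\ord_{E^*}\cI_Z>\ord_{E^*}\cO_X(-F)$ and $l\cdot\ord_{E^*}\cI_Z>\ord_{E^*}\fa$. For any effective Cartier divisors $G_i$ and $\bR$-ideal sheaf $\fb$ with $F\sim_lG_i$ and $\fa\sim_l\fb$, Theorem \ref{thm:plt} ensures that each $G_i$ is reduced and transversal on $(X,\Delta,\fb)$ about $Z$ and that
\begin{align*}
\mld_{F\cap Z}(F,\Delta_F,\fa\cO_F)=\mld_{\nu_i^{-1}(G_i\cap Z)}(G_i^\nu,\Delta_{G_i^\nu},\fb\cO_{G_i^\nu})
\end{align*}
through the normalisation $\nu_i\colon G_i^\nu\to G_i$. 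Invoking the assumed precise inversion of adjunction on $X$ on each side rewrites the equality as $\mld_Z(X,F+\Delta,\fa)=\mld_Z(X,G_i+\Delta,\fb)$, so $\mld_Z(X,G_i+\Delta,\fb)=c$ for every $i$.

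Transversality forces no component of $G=\sum g_iG_i$ to lie in the support of $\Delta$, so the triple $(X,G+\Delta,\fb)$ is well defined, and the identity $\sum g_i=1$ together with $\varphi^*G=\sum g_i\varphi^*G_i$ yields the linearity
\begin{align*}
a_E(X,G+\Delta,\fb)=\sum_ig_ia_E(X,G_i+\Delta,\fb)
\end{align*}
for any prime divisor $E$ on an extraction of $X$. Applied to arbitrary $E$ with centre in $Z$, this gives the lower bound $a_E(X,G+\Delta,\fb)\ge\sum_ig_ic=c$, hence $\mld_Z(X,G+\Delta,\fb)\ge c$. For the reverse bound I would apply Remark \ref{rmk:order} at the chosen $E^*$: the prescribed inequalities force $\ord_{E^*}\cO_X(-G_i)=\ord_{E^*}\cO_X(-F)$ and $\ord_{E^*}\fb=\ord_{E^*}\fa$, so $a_{E^*}(X,G_i+\Delta,\fb)=a_{E^*}(X,F+\Delta,\fa)=c$ for each $i$, and the linearity now gives $a_{E^*}(X,G+\Delta,\fb)=c$ and thus $\mld_Z(X,G+\Delta,\fb)\le c$.

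The most delicate point is the existence of a single divisor $E^*$ realising $\mld_Z(X,F+\Delta,\fa)$, since in general the infimum defining mld need not be attained; under the hypothesis that the precise inversion of adjunction holds on $X$, the computation descends to the klt triple $(F,\Delta_F,\fa\cO_F)$, and on the spaces where that inversion is established (such as the l.c.i.\ case of \cite{EMY03}) the mld is realised by an explicit divisor on $F$ that lifts to $X$.
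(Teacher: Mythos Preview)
Your approach is essentially the paper's: the inequality $\mld_Z(X,F+\Delta,\fa)\ge\mld_Z(X,G+\Delta,\fb)$ is the ``easy'' direction via a computing divisor $E^*$ (this is exactly what the paper means by citing Remark~\ref{rmk:inequality}), and the reverse inequality combines Theorem~\ref{thm:plt}, precise inversion of adjunction, and the linearity $a_E(X,G+\Delta,\fb)=\sum_ig_i\,a_E(X,G_i+\Delta,\fb)$. The only cosmetic difference is order: the paper first uses linearity to reduce to a single Cartier $G$ and then invokes Theorem~\ref{thm:plt}, whereas you apply Theorem~\ref{thm:plt} to each $G_i$ and take the convex combination afterwards.

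There is, however, a genuine gap in your use of inversion of adjunction. As stated in the paper, precise inversion of adjunction requires the closed set to lie in the divisor: it gives
\[
\mld_{F\cap Z}(X,F+\Delta,\fa)=\mld_{F\cap Z}(F,\Delta_F,\fa\cO_F),\qquad
\mld_{G_i\cap Z}(X,G_i+\Delta,\fb)=\mld_{\nu_i^{-1}(G_i\cap Z)}(G_i^\nu,\Delta_{G_i^\nu},\fb\cO_{G_i^\nu}),
\]
not the versions over $Z$. So your sentence ``rewrites the equality as $\mld_Z(X,F+\Delta,\fa)=\mld_Z(X,G_i+\Delta,\fb)$'' silently assumes $Z\subset F$ and $Z\subset G_i$. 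The paper supplies exactly this missing step: ``we may assume $Z\subset F,G$ by Theorem~\ref{thm:klt} and the argument after Lemma~\ref{lem:approx}''. Concretely, $F\approx_1G_i$ forces $F\cap Z=G_i\cap Z$ as sets, so after replacing $Z$ by $F\cap Z$ one is in the required situation; on the complementary part of $Z$ neither $F$ nor $G_i$ passes through, the triple is klt there, and Theorem~\ref{thm:klt} handles it. You should insert this reduction before invoking inversion of adjunction.

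Finally, your last paragraph is an unnecessary worry. Since $(X,F+\Delta,\fa)$ is plt about $Z$, it is in particular log canonical there, and then $\mld_Z(X,F+\Delta,\fa)$ is always realised by some prime divisor on a log resolution of $(X,F+\Delta,\fa\cI_Z)$; no appeal to the l.c.i.\ case or to inversion of adjunction is needed for the existence of $E^*$.
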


\begin{proof}
We want $\mld_Z(X,F+\Delta,\fa)\le\mld_Z(X,G+\Delta,\fb)$ by Remark \ref{rmk:inequality}. Since $\mld_Z(X,G+\Delta,\fb)\ge\sum_ig_i\mld_Z(X,G_i+\Delta,\fb)$ by $K_X+G+\Delta=\sum_ig_i(K_X+G_i+\Delta)$, it is reduced to the case with a Cartier divisor $G$. We may assume $Z\subset F,G$ by Theorem \ref{thm:klt} and the argument after Lemma \ref{lem:approx}. Then the statement follows from Theorem \ref{thm:plt}. Note that the precise inversion of adjunction for triples is reduced to that for pairs.
\end{proof}

We close this section by one observation related to Conjecture \ref{cnj:mld}.

\begin{proposition}
Let $(X,\Delta)$ be a pair and $Z$ a closed subset of $X$. Let $\fa$ be an $\bR$-ideal sheaf. Then there exist real numbers $l$ and $0<t\le1$ such that\textup{:} if an $\bR$-ideal sheaf $\fb$ satisfies $\fa\sim_l\fb$, then
\begin{align*}
\mld_Z(X,\Delta,\fa)=\mld_Z(X,\Delta,\fa^{1-t}\fb^t).
\end{align*}
\end{proposition}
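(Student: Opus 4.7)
My plan is to split into three cases according to the value of $m := \mld_Z(X,\Delta,\fa)$. The common thread is to choose a witness divisor $E$ centred in $Z$ whose log discrepancy realises (or nearly realises) $m$, and to pick $l$ large enough that Remark \ref{rmk:order} forces $\ord_E\fa = \ord_E\fb$ whenever $\fa \sim_l \fb$. Since
\[
a_E(X,\Delta,\fa^{1-t}\fb^t) = (1-t)\,a_E(X,\Delta,\fa) + t\,a_E(X,\Delta,\fb)
\]
depends linearly on $t$, the equality $\ord_E\fa = \ord_E\fb$ collapses this to $a_E(X,\Delta,\fa)$ for every $t \in (0,1]$, already producing half of the desired equality of mlds.

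When $m > 0$, the triple $(X,\Delta,\fa)$ is klt about $Z$, and Theorem \ref{thm:klt} supplies an $l_1$ with $\fa \sim_{l_1} \fb \Rightarrow \mld_Z(X,\Delta,\fb) = m$; setting $l = l_1$ and $t = 1$ makes $\fa^{1-t}\fb^t = \fb$, so the statement reduces to Conjecture \ref{cnj:mld} in the klt case. When $m = -\infty$, I pick $E$ centred in $Z$ with $a_E(X,\Delta,\fa) < 0$ and $l$ with $l\ord_E\cI_Z > \ord_E\fa$; the display then evaluates to $a_E(X,\Delta,\fa) < 0$, so the mixed triple is also non-lc about $Z$ and both mlds equal $-\infty$.

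The substantive case is $m = 0$. By Remark \ref{rmk:lct}, I choose a divisor $E$ centred in $Z$ with $a_E(X,\Delta,\fa) = 0$ and an $l$ which both enforces $l\ord_E\cI_Z > \ord_E\fa$ and is large enough for Theorem \ref{thm:lct} to ensure $\mld_Z(X,\Delta,\fb) = 0$. Remark \ref{rmk:order} yields $\ord_E\fa = \ord_E\fb$, so $a_E(X,\Delta,\fb) = 0$, and the display gives $a_E(X,\Delta,\fa^{1-t}\fb^t) = 0$, bounding $\mld_Z(X,\Delta,\fa^{1-t}\fb^t) \le 0$ for every $t \in (0,1]$. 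For the reverse inequality, every $E'$ centred in $Z$ satisfies $a_{E'}(X,\Delta,\fa), a_{E'}(X,\Delta,\fb) \ge 0$ by the log canonicity of both triples, so the convex combination is also $\ge 0$.

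The main obstacle lies precisely in this $m = 0$ case: a single $l$ must simultaneously pin down the witness $E$ (preventing the mld of the mixed triple from rising above $0$ at $E$) and push the entire $\fb$-triple into the lc region (preventing any other $E'$ from dropping below $0$). Remark \ref{rmk:lct} is what reconciles these two antagonistic roles, by observing that the effective $l$ in Theorem \ref{thm:lct} can be taken as large as one wishes at a prescribed witness $E$.
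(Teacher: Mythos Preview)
Your argument is correct and proceeds quite differently from the paper's. You split on the value $m=\mld_Z(X,\Delta,\fa)$ and dispatch each case with the matching ready-made result: Theorem~\ref{thm:klt} with $t=1$ for $m>0$; Theorem~\ref{thm:lct} together with the convexity identity $a_{E'}(X,\Delta,\fa^{1-t}\fb^t)=(1-t)\,a_{E'}(X,\Delta,\fa)+t\,a_{E'}(X,\Delta,\fb)$ for $m=0$; and the obvious witness for $m=-\infty$. The paper instead treats the whole range $m\ge0$ in one stroke: it passes to a log resolution $\varphi\colon X'\to X$ of $(X,\Delta,\fa\cI_Z)$, writes $\fa\cO_{X'}=\cO_{X'}(-A)$, chooses $0<t\le1$ so that $tA\le S$ for the reduced snc boundary $S$, and applies Theorem~\ref{thm:lct} to the auxiliary lc triple $(X',S-tA,\fa^t)$ to bound every $a_E(X,\Delta,\fa^{1-t}\fb^t)$ below by $a_F(X,\Delta,\fa)$ for some prime $F\subset\varphi^{-1}(Z)$ containing the centre of $E$ on $X'$. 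Your route is more elementary and avoids the resolution entirely; the paper's route is uniform in $m$, does not lean on Theorem~\ref{thm:klt}, and produces a concrete (typically small) $t$ tied to the geometry of the resolution.

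One citation needs tightening. In the case $m>0$ you invoke Theorem~\ref{thm:klt}, whose stated hypothesis is that $(X,\Delta,\fa)$ is klt, whereas you only have $\mld_Z(X,\Delta,\fa)>0$; these differ when $(X,\Delta,\fa)$ carries a non-klt centre away from $Z$. The proof of Theorem~\ref{thm:klt} does go through verbatim under the weaker hypothesis --- the klt assumption is used only to produce $t,t'>0$ with $\mld_Z(X,\Delta,\fa^{1+t}\cI_Z^{t'})=0$, and that needs only $\mld_Z>0$ --- but you should say so rather than cite the statement as a black box. Note, finally, that once this is granted your argument actually delivers the Proposition with $t=1$ in every case, which is Conjecture~\ref{cnj:mld} itself; that is considerably more than the paper's proof yields, and is worth pausing over.
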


\begin{proof}
It suffices to prove $\mld_Z(X,\Delta,\fa)\le\mld_Z(X,\Delta,\fa^{1-t}\fb^t)$ by Remark \ref{rmk:inequality}. We may assume the log canonicity of $(X,\Delta,\fa)$. Fix a log resolution $\varphi\colon X'\to X$ of $(X,\Delta,\fa\cI_Z)$ and set $K_{X'}+\Delta':=\varphi^*(K_X+\Delta)$. Let $A$ denote the effective $\bR$-divisor on $X'$ defined by the locally principal $\bR$-ideal sheaf $\fa\cO_{X'}$, and $S$ the reduced divisor whose support is the union of the exceptional locus, $\Supp\Delta'$ and $\Supp A$. We take $0<t\le1$ such that $tA\le S$. By Theorem \ref{thm:lct} we have $l$ such that $\fa\sim_l\fb$ implies the log canonicity of $(X',S-tA,\fb^t\cO_{X'})$. In particular, for a divisor $E$ on an extraction $\psi\colon Y\to X'$ with $(\varphi\circ\psi)(E)\subset Z$,
\begin{align*}
a_E(X,\Delta,\fa^{1-t}\fb^t)&=a_E(X',(1-t)A,\fb^t\cO_{X'})-\ord_E\Delta'\\
&=a_E(X',S-tA,\fb^t\cO_{X'})+\ord_E(S-A-\Delta')\\
&\ge\ord_E(S-A-\Delta').
\end{align*}
$S-A-\Delta'=K_X'+S-(\varphi^*(K_X+\Delta)+A)\ge0$, and by a divisor $F$ with $\psi(E)\subset F \subset\varphi^{-1}(Z)$,
\begin{align*}
\ord_E(S-A-\Delta')&\ge\ord_F(S-A-\Delta')=a_F(X,\Delta,\fa).
\end{align*}
These two inequalities prove the proposition.
\end{proof}

\section{Purely log terminal case}\label{sec:plt}
The purpose of this section is to prove Theorem \ref{thm:plt}; see Lemmata \ref{lem:easy-plt} and \ref{lem:hard-plt}.

As $(X,\Delta)$ is klt, by \cite{BCHM10} there exists a $\bQ$-factorisation $\varphi\colon X'\to X$ which is isomorphic in codimension one. Then as in Remark \ref{rmk:BCHM} we can reduce the theorem to that on $X'$, and hence we may assume that {\itshape $X$ is $\bQ$-factorial and $\Delta=0$}. {\itshape We shall discuss on the germ at a closed point of $X$}.

We set the ideal sheaves in the context of motivic integration. Let $d$ denote the dimension of $X$. We fix a positive integer $r$ such that $rK_X$ is a Cartier divisor. We extend the construction in \cite[Section 2]{K08} to transversal divisors. A general l.c.i.\ subscheme $Y$ of dimension $d$ of a smooth ambient space $A$ which contains $X$ is the union
\begin{align}\label{eqn:lci}
Y=X\cup C^Y
\end{align}
of $X$ and another variety $C^Y$. The subscheme $D^Y:=C^Y|_X$ of $X$ is defined by the conductor ideal sheaf $\cC_{X/Y}:=\Hom_{\cO_Y}(\cO_X, \cO_Y)$, and is a divisor such that $\cO_X(rK_X)=\cO_X(-rD^Y)\omega_Y^{\otimes r}$. The summation $\cD'_X:=\sum_Y\cC_{X/Y}$ over all general $Y$ is called the \textit{l.c.i.\ defect ideal sheaf} of $X$, which one can define for reduced schemes of pure dimension. We treat the summation $\cD_{r,X}:=\sum_Y\cO_X(-rD^Y)$ also. For a reduced Cartier divisor $G$, the above $Y=X\cup C^Y$ has a Cartier divisor $Y_G=G\cup C^Y|_{Y_G}$. Thus $G$ has its l.c.i.\ defect ideal sheaf
\begin{align}\label{eqn:defect}
\cD'_G=\cD'_X\cO_G,
\end{align}
and we have $\cO_X(r(K_X+G))\cO_G=\cO_X(-rD^Y)\cO_G\cdot\omega_{Y_G}^{\otimes r}$.

Let $\cJ'_G$ be the Jacobian ideal sheaf of $G$, and $\cJ_{r,G}$ the image of the natural map $(\Omega_G^{d-1})^{\otimes r}\otimes\cO_X(-r(K_X+G))\to\cO_G$. Let $\tilde{\cJ}'_G,\tilde{\cJ}_{r,G}$ be the inverse images of them by the natural map $\cO_X\to\cO_G$. The argument in \cite{K08} provides the equality $\sum_Y{\cJ'_{Y_G}}^r\cO_G=\cJ_{r,G}\cdot\cD_{r,X}\cO_G$ similar to \cite[(2.4)]{K08} with the Jacobian $\cJ'_{Y_G}$ of $Y_G$. Its left-hand side is nothing but ${\cJ'_G}^r$. For, set local coordinates $x_1,\ldots,x_k$ of $A$ and the ideal sheaves $\cI_X,\cI_Y$ of $X,Y$ on $A$, and take $f_1,\ldots,f_c\in\cO_A$, $c=k-d+1$, such that $f_1|_X$ defines $G$ and $f_2,\ldots,f_c$ generate $\cI_Y$. Then for arbitrary $g_2,\ldots,g_c\in\cI_X$ and general $t_2,\ldots,t_c\in k$, the subscheme defined by $f_i+t_ig_i$, $2\le i\le c$, is a general l.c.i.\ $Y'$. Thus with $g_1:=f_1$ and $t_1\in k$, the $r$-th powers of determinants of $c\times c$ minors of the matrix $(\partial(f_i+t_ig_i)/\partial x_j)_{ij}|_G$ are contained in $\sum_Y{\cJ'_{Y_G}}^r\cO_G$, whence so are those of $(\partial g_i/\partial x_j)_{ij}|_G$. This means $\sum_Y{\cJ'_{Y_G}}^r\cO_G=\sum_{j\in{\cJ}'_G}j^r\cO_G$, and its right-hand side equals ${\cJ'_G}^r$ by the same trick. Hence we obtain
\begin{align}
\nonumber
{\cJ'_G}^r&=\cJ_{r,G}\cdot\cD_{r,X}\cO_G,\\
\label{eqn:defectX}
\tilde{\cJ}'_G{}^r+\cO_X(-G)&=\tilde{\cJ}_{r,G}\cdot\cD_{r,X}+\cO_X(-G).
\end{align}

We set
\begin{align*}
c:=\mld_{F\cap Z}(F,\fa\cO_F).
\end{align*}
As $(X,F,\fa)$ is plt, we can fix $t>0,t'\ge0$ such that
\begin{align*}
\mld_Z(X,F,\fa^{1+t}\tilde{\cJ}'_F{}^{rt}\cD'_X{}^t\cI_Z^{t'})=0.
\end{align*}

We will fix a log resolution $\bar{\varphi}\colon\bar{X}\to X$ of $(X,F,\fa\cI_Z\tilde{\cJ}'_F\tilde{\cJ}_{r,F}\cD'_X\cD_{r,X})$. Let $\bar{F}$ be the strict transform of $F$. By blowing up $\bar{X}$ further, we may assume the existence of a prime divisor $E_F\subset\bar{\varphi}^{-1}(F\cap Z)$ which intersects $\bar{F}$ properly and satisfies
\begin{align}\label{eqn:mldF}
a_{E_F}(X,F,\fa)=a_{E_F|_{\bar{F}}}(F,\fa\cO_F)=c.
\end{align}
Take the decomposition $\bar{\varphi}^*F=V_F+H_F$, where $V_F$ consists of prime divisors in $\bar{\varphi}^{-1}(Z)$ and $H_F$ those not in $\bar{\varphi}^{-1}(Z)$. By blowing up $\bar{X}$ further, we may assume that every divisor $\bar{E}$ with $\bar{E}\subset\Supp V_F$, $\bar{E}\cap\Supp H_F\neq\emptyset$ satisfies
\begin{align}\label{eqn:t^-1c}
\ord_{\bar{E}}V_F>t^{-1}c.
\end{align}
We take an integer $l_1$ such that
\begin{align}\label{eqn:l1}
l_1>\ord_{\bar{E}}V_F,\qquad l_1>\ord_{\bar{E}}\fa
\end{align}
for all divisors $\bar{E}$ on $\bar{X}$ with $\bar{\varphi}(\bar{E})\subset Z$. Note that
\begin{align}\label{eqn:estimate_l1}
l_1>t^{-1}c+1
\end{align}
unless $F\subset Z$.

The next lemma is a direct application of Theorem \ref{thm:lct} with Remark \ref{rmk:lct} by (\ref{eqn:l1}).

\begin{lemma}\label{lem:transversal}
For $\bR$-ideal sheaves $\fg,\fb$ such that $\cO_X(-F)\sim_{l_1}\fg$, $\fa\sim_{l_1}\fb$, we have $\mld_Z(X,\fg\fb^{1+t}\tilde{\cJ}'_F{}^{rt}\cD'_X{}^t\cI_Z^{t'})=0$. In particular if $(F,\fa)\sim_{l_1}(G,\fb)$ then $G$ is a transversal divisor on $(X,\fb)$.
\end{lemma}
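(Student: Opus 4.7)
The plan is to recast the hypothesis on $t,t'$ as $\mld_Z(X,\fa_0)=0$ for the $\bR$-ideal
\begin{align*}
\fa_0:=\cO_X(-F)\fa^{1+t}\tilde{\cJ}'_F{}^{rt}\cD'_X{}^t\cI_Z^{t'},
\end{align*}
encoding the reduced Cartier divisor $F$ as the principal $\bR$-ideal $\cO_X(-F)$. The target then becomes $\mld_Z(X,\fb_0)=0$ for $\fb_0:=\fg\fb^{1+t}\tilde{\cJ}'_F{}^{rt}\cD'_X{}^t\cI_Z^{t'}$, to which I would apply Theorem \ref{thm:lct}.

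First I would verify $\fa_0\sim_{l_1}\fb_0$. Matching expressions factor by factor with common exponents $(1,1+t,rt,t,t')$, the three factors $\tilde{\cJ}'_F{}^{rt},\cD'_X{}^t,\cI_Z^{t'}$ are identical on both sides and impose no condition, while the two moving factors reduce to $\cO_X(-F)\approx_{l_1}\fg$ and $\fa\approx_{l_1/(1+t)}\fb$, both given (since $\approx_{l_1}$ implies $\approx_{l_1/r_i}$ for $r_i\ge1$).

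Next I would apply Theorem \ref{thm:lct} with the effective bound of Remark \ref{rmk:lct} to $\fa_0$. The relevant divisor is a $\bar{E}$ on $\bar{X}$ with $a_{\bar{E}}(X,\fa_0)=0$ and $\bar{\varphi}(\bar{E})\subset Z$, which exists because $\bar{X}$ is a log resolution of $(X,F,\fa\cI_Z\tilde{\cJ}'_F\tilde{\cJ}_{r,F}\cD'_X\cD_{r,X})$. The only factors of $\fa_0$ changing in passing to $\fb_0$ are $\cO_X(-F)$ and $\fa$; since $\bar{E}\subset\bar{\varphi}^{-1}(Z)$ forces $\ord_{\bar{E}}\cI_Z\ge 1$, the bounds (\ref{eqn:l1}) give
\begin{align*}
l_1\ord_{\bar{E}}\cI_Z\ge l_1>\ord_{\bar{E}}V_F=\ord_{\bar{E}}\cO_X(-F)\quad\text{and}\quad l_1\ord_{\bar{E}}\cI_Z\ge l_1>\ord_{\bar{E}}\fa,
\end{align*}
which are the per-factor conditions of Remark \ref{rmk:order} applied to each moving factor. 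Thus $l_1$ clears the effective threshold and Theorem \ref{thm:lct} yields $\mld_Z(X,\fb_0)=0$.

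For the ``in particular'' statement, specialising to $\fg=\cO_X(-G)$ for an effective Cartier divisor $G$ turns the conclusion into $\mld_Z(X,G,\fb^{1+t}\tilde{\cJ}'_F{}^{rt}\cD'_X{}^t\cI_Z^{t'})=0$, so $(X,G,\fb)$ is log canonical about $Z$; combined with the $\cI_Z$-adic closeness to the reduced divisor $F$, this forces $G$ to be reduced and disjoint from the zero locus of $\fb$ about $Z$, i.e.\ transversal. The delicate step is the per-factor reading of Remark \ref{rmk:lct}: its underlying mechanism (Remark \ref{rmk:order}) operates factor by factor, and since $\tilde{\cJ}'_F,\cD'_X,\cI_Z$ do not change, only the orders of $\cO_X(-F)$ and $\fa$ at $\bar{E}$ are relevant, which is exactly the content of (\ref{eqn:l1}).
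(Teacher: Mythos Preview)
Your proposal is correct and takes essentially the same approach as the paper, whose proof is the one-liner ``direct application of Theorem~\ref{thm:lct} with Remark~\ref{rmk:lct} by~(\ref{eqn:l1}).'' Your per-factor reading of Remark~\ref{rmk:lct} through Remark~\ref{rmk:order} is precisely what makes the paper's terse citation of~(\ref{eqn:l1}) work, since (\ref{eqn:l1}) bounds only $\ord_{\bar E}F$ and $\ord_{\bar E}\fa$, not the full $\ord_{\bar E}\fa_0$; one small notational slip is that you write $\cO_X(-F)\approx_{l_1}\fg$ and $\fa\approx_{l_1/(1+t)}\fb$ where the hypotheses give only $\sim_{l_1}$ (and $\approx$ is defined between honest ideal sheaves), but feeding the matching expressions witnessing these $\sim_{l_1}$ relations into the product yields $\fa_0\sim_{l_1}\fb_0$ as you intend.
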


We can replace the condition $F\sim_lG$ with the stronger one $F\approx_lG$ defined by $\cO_X(-F)\approx_l\cO_X(-G)$.

\begin{lemma}\label{lem:approx}
If $F\sim_lG$ with $l\ge l_1$, then $F\approx_lG$.
\end{lemma}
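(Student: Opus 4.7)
The plan is to unpack the $\sim_l$ hypothesis into component-wise approximations on locally principal Cartier factors and then recombine them into a single $\approx_l$ statement between the ideal sheaves $\cO_X(-F)$ and $\cO_X(-G)$. By the definition of $\sim_l$, there exist expressions $\cO_X(-F) = \prod_{i=1}^k \fa_i^{r_i}$ and $\cO_X(-G) = \prod_{i=1}^k \fb_i^{r_i}$ in $\fR_X$ with $\fa_i \approx_{l/r_i} \fb_i$. Lemma-Definition \ref{lem-def:Cartier} forces each factor to be locally principal, $\fa_i = \cO_X(-F_i)$ and $\fb_i = \cO_X(-G_i)$ for Cartier divisors satisfying $\sum r_i F_i = F$ and $\sum r_i G_i = G$ as $\bR$-divisors, and by Remark \ref{rmk:sim} I may arrange $\fa_i \equiv_{l_i} \fb_i$ for integers $l_i \ge l/r_i$, i.e. $\cO_X(-F_i) + \cI_Z^{l_i} = \cO_X(-G_i) + \cI_Z^{l_i}$.

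Next I would find positive integers $a_i, n$ satisfying the divisor identities $\sum a_i F_i = nF$ and $\sum a_i G_i = nG$; since this is a $\bQ$-linear system admitting the real solution $(a_i,n) = (n r_i, n)$, positive integer solutions exist by density of rationals in the solution cone together with clearing denominators. With such exponents, $\cO_X(-nF) = \prod \fa_i^{a_i}$ and $\cO_X(-nG) = \prod \fb_i^{a_i}$ as ideal sheaves. Locally trivialising $\cO_X(-F_i) = (f_i)$ and $\cO_X(-G_i) = (g_i)$, the relation $\fa_i \equiv_{l_i} \fb_i$ yields $f_i = u_i g_i + h_i$ with $u_i$ a unit (since $l_i > \ord_Z F_i$, which follows from $r_i l_i \ge l > r_i \ord_Z F_i$) and $h_i \in \cI_Z^{l_i}$. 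Expanding $\prod f_i^{a_i} = \prod (u_i g_i + h_i)^{a_i}$ and collecting the cross terms that carry the $h_i$'s produces an identity $f^n - U g^n \in \cI_Z^m$ for a unit $U$ and an integer $m$ bookkeeping the contributions of the $l_i$ and of the orders of the $g_i$ along $Z$; this gives $\cO_X(-nF) \equiv_m \cO_X(-nG)$ and hence $F \approx_{m/n} G$.

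The hard part will be verifying the sharp inequality $m/n \ge l$, and this is exactly where the assumption $l \ge l_1$ enters. Applying Remark \ref{rmk:order} to each $\fa_i \sim_{l/r_i} \fb_i$, together with the chain $r_i \ord_{\bar E} F_i \le \ord_{\bar E} V_F < l_1 \le l \le (l/r_i)\ord_{\bar E}\cI_Z$ valid for every prime divisor $\bar E$ on $\bar X$ with $\bar\varphi(\bar E) \subset Z$ by \eqref{eqn:l1}, one obtains $\ord_{\bar E} F_i = \ord_{\bar E} G_i$ on every such $\bar E$. This valuation agreement synchronises the orders of the $g_i$ along $Z$ with those of the $f_i$, which upgrades the naive $\min_i l_i$ bound from the expansion to a weighted minimum accounting for vanishing along $Z$; combined with a careful choice of the parameters $(a_i, n)$ respecting these orders, it delivers $m/n \ge l$ and completes $F \approx_l G$.
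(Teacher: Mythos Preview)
Your approach is genuinely different from the paper's, and the last paragraph contains a real gap.

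The paper does not attempt any multiplicative expansion. Instead it exploits two structural facts. First, since $(X,F,\fa)$ is plt, $F$ is normal and hence irreducible on the germ, so in any expression $\cO_X(-F)=\prod_j\fa_j^{f_j}$ the factors are forced to be $\fa_j=\cO_X(-n_jF)$ with $\sum_jf_jn_j=1$. Second, Lemma~\ref{lem:transversal} (this is where $l\ge l_1$ enters) applied to $\fg=\cO_X(-H_j)^{1/n_j}$ shows that every coefficient of $n_j^{-1}H_j$ is at most $1$; combining this with $\sum_jf_jH_j=G$ reduced and $\sum_jf_jn_j=1$ forces $H_j=n_jG$ for every $j$. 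One then reads off $\cO_X(-n_jF)\equiv_{m_j}\cO_X(-n_jG)$ with $m_j/n_j\ge l/(f_jn_j)\ge l$, which is exactly $F\approx_lG$. No choice of exponents $(a_i,n)$, no product expansion, and no bookkeeping of $\cI_Z$-depths is required.

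In your argument the crucial step ``it delivers $m/n\ge l$'' is not justified. After expanding $\prod_i(u_ig_i+h_i)^{a_i}$, a cross term carrying a single $h_j$ lies only in $\cI_Z^{l_j}$, so the expansion yields $f^n-Ug^n\in\cI_Z^{m}$ with $m=\min_jl_j\ge l/\max_jr_j$; to get $m/n\ge l$ you would need $n\le 1/\max_jr_j$, but the integrality constraints $\sum a_iF_i=nF$, $\sum a_iG_i=nG$ give you no control over how small $n$ can be unless you already know $G_i=n_iG$. Your proposed fix via the valuation equalities $\ord_{\bar E}F_i=\ord_{\bar E}G_i$ on $\bar X$ does not help: estimates on $\ord_{\bar E}$ for the finitely many $\bar E$ on a fixed resolution give at best membership in an integral closure, not in $\cI_Z^{m}$ itself, and in any case you have not specified how ``a careful choice of $(a_i,n)$'' overcomes the deficit. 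The missing idea is precisely the paper's: use log canonicity from Lemma~\ref{lem:transversal} to pin down the $G$-side factors as multiples of $G$.
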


\begin{proof}
$G$ is reduced by Lemma \ref{lem:transversal}. By the definition of $F\sim_lG$ and Lemma-Definition \ref{lem-def:Cartier}, there exist decompositions $1=\sum_jf_jn_j$, $G=\sum_jf_jH_j$ with $f_j\in\bR_{>0}$, $n_j\in\bZ_{>0}$ and effective Cartier divisors $H_j$ such that $\cO_X(-n_jF)\equiv_{m_j}\cO_X(-H_j)$ with $m_j\ge l/f_j$. Note $\cO_X(-F)\approx_{m_j/n_j}\cO_X(-H_i)^{1/n_j}$ and $m_j/n_j\ge l/f_jn_j\ge l$. Hence all coefficients in $n_j^{-1}H_j$ are at most one by Lemma \ref{lem:transversal}. Thus each component $G_i$ of $G$ has $\ord_{G_i}H_j\le n_j$, so $1=\sum_jf_j\ord_{G_i}H_j\le \sum_jf_jn_j=1$ and $\ord_{G_i}H_j=n_j$, $H_j=n_jG$. Now the lemma follows from $\cO_X(-n_jF)\equiv_{m_j}\cO_X(-n_jG)$ and $m_j/n_j\ge l$.
\end{proof}

Now we may assume that {\itshape $Z$ is an irreducible proper subset of $F$, and is contained in $G$ also}. Indeed, since $F\approx_1G$ implies $F\cap Z=G\cap Z$ as sets, we may assume $Z\subset F,G$ by replacing $Z$ with $F\cap Z$. If $Z=F$ then $G\ge F$ and $F\approx_2G$ means $\cO_X(-nF)=\cO_X(-nF)(\cO_X(-n(G-F))+\cO_X(-nF))$ for some $n$, so $F=G$, $\fa\cO_F=\fb\cO_G$ and the statement is trivial. 

We write $(F,\fa)\approx_l(G,\fb)$ for the condition $F\approx_lG$, $\fa\sim_l\fb$. $G$ is transversal if $(F,\fa)\approx_{l_1}(G,\fb)$ by Lemma \ref{lem:transversal}. We then consider a log resolution $G'\to G$ embedded into some log resolution $\varphi\colon X'\to X$ of $(X,F+G,\fa\fb\tilde{\cJ}'_G\tilde{\cJ}_{r,G})$ which factors through $\bar{X}$. Set $\varphi'\colon X'\to\bar{X}$. Let $I$ denote the set of all $\varphi$-exceptional prime divisors $E$ on $X'$ intersecting $G'$, and $I_Z$ the subset of $I$ consisting of all $E$ with $\varphi(E)\subset Z$. By blowing up $X'$ further, we may assume that $G'$ does not intersect the strict transform of the divisorial part of the zero locus of $\fb$, and that for all $E\in I$
\begin{align}\label{eqn:resolutionG}
\varphi'(E)=\varphi'(E|_{G'}).
\end{align}
Then $\mld_{\nu^{-1}(Z)}(G^\nu,\fb\cO_{G^\nu})$ equals the minimum of $a_E(X,G,\fb)=a_{E|_{G'}}(G^\nu,\fb\cO_{G^\nu})$ for all $E\in I_Z$, or $-\infty$ if the minimum is negative.

\begin{lemma}\label{lem:l1}
If $(F,\fa)\approx_{l_1}(G,\fb)$, then for $E\in I_Z$
\begin{enumerate}
\item\label{itm:l1:ordJ'D}
$rt\ord_E\tilde{\cJ}'_F+t\ord_E\cD'_X+t\ord_E\fb\le a_{E|_{G'}}(G^\nu,\fb\cO_{G^\nu})$.
\item\label{itm:l1:ordFG}
$\ord_EF>t^{-1}c$ and $\ord_EG>t^{-1}c$.
\end{enumerate}
\end{lemma}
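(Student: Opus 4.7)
The plan is to derive (i) by a direct application of Lemma \ref{lem:transversal} combined with adjunction, and to derive (ii) by pushing the condition $F\approx_{l_1}G$ through the log resolution so as to force $\varphi'(E)$ onto a component of $V_F$ that already meets $\Supp H_F$.

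For (i), I apply Lemma \ref{lem:transversal} with $\fg=\cO_X(-G)$, which is legitimate since $F\approx_{l_1}G$ gives $\cO_X(-F)\sim_{l_1}\cO_X(-G)$. The resulting $\mld_Z(X,\cO_X(-G)\fb^{1+t}\tilde{\cJ}'_F{}^{rt}\cD'_X{}^t\cI_Z^{t'})=0$ evaluated on $E\in I_Z$ reads
\begin{align*}
a_E(X,0)\ge\ord_EG+(1+t)\ord_E\fb+rt\ord_E\tilde{\cJ}'_F+t\ord_E\cD'_X+t'\ord_E\cI_Z.
\end{align*}
Subtracting $\ord_EG+\ord_E\fb$, dropping the non-negative $t'\ord_E\cI_Z$, and invoking $a_E(X,G,\fb)=a_{E|_{G'}}(G^\nu,\fb\cO_{G^\nu})$ from the setup, this is exactly (i).

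For (ii), I first rule out that $\varphi'(E)$ is a divisor on $\bar X$: were it a divisor $\bar E$, its image under $\bar\varphi$ would lie in $Z$ (of codimension $\ge2$ since $Z\subsetneq F$) forcing $\bar\varphi$-exceptionality, yet $\varphi'(E)\subset\varphi'(G')=\bar G$ would force $\bar E$ to be a component of the strict transform of $G$, incompatible with exceptionality. The central geometric step is then the set-theoretic equality
\begin{align*}
\bar G\cap\bar\varphi^{-1}(Z)=\bar F\cap\bar\varphi^{-1}(Z).
\end{align*}
For this, pick $n,m$ with $m/n\ge l_1$ and $\cO_X(-nF)+\cI_Z^m=\cO_X(-nG)+\cI_Z^m$, and write local relations $f^n=ag^n+b$ and $g^n=a'f^n+b'$ with $b,b'\in\cI_Z^m$. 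Remark \ref{rmk:order} together with (\ref{eqn:l1}) gives $\ord_{\bar E}F=\ord_{\bar E}G$ on every divisor $\bar E$ above $Z$, so $V_F=V_G$ on $\bar X$ and the pullbacks $\bar\varphi^*f$, $\bar\varphi^*g$ share a common exceptional factor; cancelling it forces $\bar f^n-\bar a\bar g^n$ to vanish to positive order on each component of $\bar\varphi^{-1}(Z)$, whence $\bar F\cap\bar E\supset\bar G\cap\bar E$ on each such $\bar E$. The symmetric argument with $a'$ reverses the inclusion and produces the equality.

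With this equality in hand, $\varphi'(E)\subset\bar F\cap\Supp V_F$, and irreducibility places $\varphi'(E)$ inside $\bar F\cap\bar E_0$ for some component $\bar E_0$ of $V_F$; in particular $\bar E_0\cap\Supp H_F\supset\bar E_0\cap\bar F\ne\emptyset$. Then (\ref{eqn:t^-1c}) delivers $\ord_{\bar E_0}V_F>t^{-1}c$, whence $\ord_EF\ge\ord_E\varphi'^*\bar E_0\cdot\ord_{\bar E_0}V_F>t^{-1}c$. The bound $\ord_EG>t^{-1}c$ is immediate: either $\ord_EF=\ord_EG$ by Remark \ref{rmk:order}, or both orders exceed $l_1\ord_E\cI_Z\ge l_1>t^{-1}c+1$. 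The main obstacle is the middle step—converting the $\cI_Z$-adic proximity of $F,G$ on $X$ into the literal set-theoretic coincidence of their strict transforms along $\bar\varphi^{-1}(Z)$.
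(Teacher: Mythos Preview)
Your argument for (\ref{itm:l1:ordJ'D}) is correct and is exactly the paper's: unpack Lemma~\ref{lem:transversal} at $E$ and use adjunction $a_E(X,G,\fb)=a_{E|_{G'}}(G^\nu,\fb\cO_{G^\nu})$.

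For (\ref{itm:l1:ordFG}) your strategy is also the paper's, but there is a notational slip in the ``central geometric step'' that you should fix. After cancelling the common factor $v$ cutting out $V_F=V_G$, your functions $\bar f,\bar g$ are local equations for the \emph{horizontal parts} $H_F,H_G$ of $\bar\varphi^*F,\bar\varphi^*G$, not for the strict transforms $\bar F,\bar G$; in general $H_F$ may contain exceptional components of $\bar\varphi$ whose centres lie in $F$ but not in $Z$. So what your computation actually yields is the set-theoretic identity
\[
\Supp H_G\cap\bar\varphi^{-1}(Z)=\Supp H_F\cap\bar\varphi^{-1}(Z),
\]
which is precisely the underlying set of the paper's scheme-theoretic equality (\ref{eqn:support}). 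The stated equality $\bar G\cap\bar\varphi^{-1}(Z)=\bar F\cap\bar\varphi^{-1}(Z)$ does not literally follow. Fortunately this does not damage the conclusion: from $\varphi'(E)=\varphi'(E|_{G'})\subset\bar G\cap\bar\varphi^{-1}(Z)\subset\Supp H_G\cap\bar\varphi^{-1}(Z)=\Supp H_F\cap\bar\varphi^{-1}(Z)$ you already get, by irreducibility, a component $\bar E_0\subset\bar\varphi^{-1}(Z)$ with $\varphi'(E)\subset\bar E_0$ and $\bar E_0\cap\Supp H_F\neq\emptyset$, so (\ref{eqn:t^-1c}) applies directly. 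Your detour through $\bar F$ is unnecessary, as is the preliminary step ruling out that $\varphi'(E)$ is a divisor (that is automatic once $\varphi'(E)$ sits in the codimension-$\ge2$ locus $\Supp H_F\cap\bar\varphi^{-1}(Z)$). With $H_F,H_G$ in place of $\bar F,\bar G$, your proof coincides with the paper's.
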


\begin{proof}
(\ref{itm:l1:ordJ'D}) \
It follows from Lemma \ref{lem:transversal}.

(\ref{itm:l1:ordFG}) \
If we write $\cI_Z\cO_{\bar{X}}=\cO_{\bar{X}}(-V_Z)$, then by (\ref{eqn:l1}) the divisor $l_1V_Z-V_F$ is effective with support $\bar{\varphi}^{-1}(Z)$. By $F\approx_{l_1}G$ we have the decomposition $\bar{\varphi}^*G=V_F+H_G$ in which $H_G$ consists of divisors not in $\bar{\varphi}^{-1}(Z)$, and moreover
\begin{align*}
&\cO_{\bar{X}}(-nV_F)(\cO_{\bar{X}}(-nH_F)+\cO_{\bar{X}}(-n(l_1V_Z-V_F))\\
&=\cO_{\bar{X}}(-nV_F)(\cO_{\bar{X}}(-nH_G)+\cO_{\bar{X}}(-n(l_1V_Z-V_F))
\end{align*}
for some $n$. Hence on the reduced divisor $\bar{\varphi}^{-1}(Z)$,
\begin{align}\label{eqn:support}
nH_F\cap\bar{\varphi}^{-1}(Z)=nH_G\cap\bar{\varphi}^{-1}(Z)
\end{align}
scheme-theoretically, and its support contains $\varphi'(E)$ by (\ref{eqn:resolutionG}). Thus there exists a prime divisor $\bar{E}$ on $\bar{X}$ with $\varphi'(E)\subset\bar{E}\subset\bar{\varphi}^{-1}(Z)$ and $\bar{E}\cap\Supp H_F\neq\emptyset$. $\bar{E}$ has $\ord_{\bar{E}}G=\ord_{\bar{E}}F>t^{-1}c$ by (\ref{eqn:t^-1c}), so $\ord_EF\ge\ord_{\bar{E}}F>t^{-1}c$, $\ord_EG\ge\ord_{\bar{E}}G>t^{-1}c$. 
\end{proof}

We obtain one inequality in Theorem \ref{thm:plt} as in Remark \ref{rmk:inequality}.

\begin{lemma}\label{lem:easy-plt}
If $(F,\fa)\approx_{l_1}(G,\fb)$, then $\mld_Z(F,\fa\cO_F)\ge\mld_{\nu^{-1}(Z)}(G^\nu,\fb\cO_{G^\nu})$.
\end{lemma}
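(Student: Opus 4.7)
The plan mirrors Remark \ref{rmk:inequality}: produce a divisor on $G^\nu$ with log discrepancy at most $c:=\mld_Z(F,\Delta_F,\fa\cO_F)$ by restricting the divisor $E_F$ of (\ref{eqn:mldF}) to the strict transform of $G$ on a common log resolution, using the ideal-adic closeness to keep the relevant orders aligned.

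Concretely, I would fix a sufficiently fine log resolution $\varphi\colon W\to X$ of $(X,F+G,\fa\fb\cI_Z\tilde{\cJ}'_F\tilde{\cJ}_{r,F}\tilde{\cJ}'_G\tilde{\cJ}_{r,G}\cD'_X\cD_{r,X})$ factoring through $\bar{X}$, and arrange by a further blow-up that the prime divisor $E_F$ of (\ref{eqn:mldF}) meets both strict transforms $F_W$ and $G_W$ properly as components of an snc divisor on $W$. Since $E_F\subset\varphi^{-1}(Z)$ forces $\ord_{E_F}\cI_Z\ge 1$, inequality (\ref{eqn:l1}) gives $l_1>\ord_{E_F}F$ and $l_1>\ord_{E_F}\fa$, so Remark \ref{rmk:order} applied to $F\approx_{l_1}G$ and $\fa\sim_{l_1}\fb$ yields $\ord_{E_F}F=\ord_{E_F}G$ and $\ord_{E_F}\fa=\ord_{E_F}\fb$. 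Hence
\[
a_{E_F}(X,G,\fb)=1+\ord_{E_F}K_{W/X}-\ord_{E_F}G-\ord_{E_F}\fb=a_{E_F}(X,F,\fa)=c.
\]
Writing $\pi\colon G_W\to G^\nu$ for the induced morphism, adjunction $\nu^*(K_X+G)|_{G^\nu}=K_{G^\nu}+\Delta_{G^\nu}$ gives $K_{G_W}-\pi^*(K_{G^\nu}+\Delta_{G^\nu})=(K_{W/X}+G_W-\varphi^*G)|_{G_W}$, so the different $\Delta_{G^\nu}$ does not appear when computing the log discrepancy of $E_F|_{G_W}$ on $(G^\nu,\Delta_{G^\nu},\fb\cO_{G^\nu})$; one obtains $a_{E_F|_{G_W}}(G^\nu,\Delta_{G^\nu},\fb\cO_{G^\nu})=a_{E_F}(X,G,\fb)=c$. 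Since the centre of $E_F|_{G_W}$ lies in $\nu^{-1}(F\cap Z)=\nu^{-1}(Z)$, this yields $\mld_{\nu^{-1}(Z)}(G^\nu,\Delta_{G^\nu},\fb\cO_{G^\nu})\le c$, as required.

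The main difficulty is the very first step: arranging $E_F$ to meet $G_W$ properly rather than be disjoint from it. Because $\varphi(E_F)\subset F\cap Z\subset G$, the divisor $E_F$ is automatically an exceptional component of $\varphi^*G$ distinct from $G_W$, so on an snc model $E_F$ and $G_W$ are either disjoint or meet transversally, and only the latter is useful. The closeness $F\approx_{l_1}G$ forces $\varphi^*F$ and $\varphi^*G$ to have matching coefficients along $E_F$, and in model examples such as $F=\{x=0\}$, $G=\{x+y^{l_1}=0\}$ near $Z=\{o\}$, successive blow-ups over $F\cap Z$ produce exceptional divisors meeting both strict transforms simultaneously; a sufficiently fine log resolution should position $E_F$ to meet $G_W$. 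Should this geometric justification prove subtle, one can instead perturb $E_F$ within the family of divisors computing $c$ on $F$ to one that provably lies in the set $I_Z$ of the paper's setup.
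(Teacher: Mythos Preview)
Your overall strategy matches the paper's: restrict $E_F$ to the strict transform of $G$ and use $\ord_{E_F}F=\ord_{E_F}G$, $\ord_{E_F}\fa=\ord_{E_F}\fb$ (which you correctly derive from (\ref{eqn:l1}) and Remark~\ref{rmk:order}) to conclude $a_{E_F}(X,G,\fb)=c$. The adjunction step is also fine, \emph{provided} the restriction $E_F|_{G_W}$ makes sense as a prime divisor.

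The gap you yourself flag is genuine, and neither of your suggested fixes works. Further blow-ups cannot create intersections of strict transforms: if on some model the strict transforms of $E_F$ and $G$ are disjoint, they stay disjoint on every model above it, so ``arranging by a further blow-up that $E_F$ meets $G_W$'' is not a valid operation. Conversely, passing from $\bar X$ to a log resolution of $(X,F+G,\ldots)$ may well \emph{destroy} an existing intersection of $E_F$ and $\bar G$. The numerical equality $\ord_{E_F}F=\ord_{E_F}G$ only says the total pullbacks have equal coefficient along $E_F$; it says nothing about where the strict transform $\bar G$ sits relative to $E_F$. Your fallback of perturbing $E_F$ inside the family of divisors computing $c$ is not a proof either: no mechanism is given, and there is no a priori reason such a perturbation lands in $I_Z$.

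The paper closes the gap on $\bar X$ itself, using a sharper consequence of $F\approx_{l_1}G$ than equality of orders. From the proof of Lemma~\ref{lem:l1}(\ref{itm:l1:ordFG}) one has the decomposition $\bar\varphi^*G=V_F+H_G$ with the \emph{same} vertical part as $\bar\varphi^*F=V_F+H_F$, together with the scheme-theoretic equality $nH_F\cap\bar\varphi^{-1}(Z)=nH_G\cap\bar\varphi^{-1}(Z)$ of (\ref{eqn:support}). Since $W:=\bar F\cap E_F$ lies in $H_F\cap\bar\varphi^{-1}(Z)$, it lies in $\Supp H_G\cap E_F$. All components of $H_G-\bar G$ are $\bar\varphi$-exceptional, so $\bar F+E_F+\Supp(H_G-\bar G)$ is s.n.c.; the codimension-two locus $W=\bar F\cap E_F$ then cannot be contained in a third component of this divisor, forcing $W\subset\bar G$. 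Equation (\ref{eqn:support}) upgrades this to $W=\bar G\cap E_F$ scheme-theoretically at the generic point of $W$, so $\bar G$ meets $E_F$ transversally there. One then takes the strict transform $W'$ of $W$ on $G'$, rather than attempting to intersect $E_F$ with the strict transform of $G$ on a common log resolution of both $F$ and $G$.
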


\begin{proof}
We have the divisor $E_F\subset\bar{\varphi}^{-1}(Z)$ in (\ref{eqn:mldF}). $W:=\bar{F}\cap E_F$ is contained in the support of the locus (\ref{eqn:support}), whence $W\subset\Supp H_G\cap E_F$. This implies $W\subset\bar{G}\cap E_F$ for the strict transform $\bar{G}$ of $G$ by the s.n.c.\ property of $\bar{F}+E_F+\Supp(H_G-\bar{G})$. Moreover by (\ref{eqn:support}), $nW=n\bar{G}|_{E_F}$ as divisors on $E_F$ at the generic point $\eta_W$ of $W$. Hence $W=\bar{G}\cap E_F$ scheme-theoretically at $\eta_W$, and its strict transform $W'$ on $G'$ is defined. With (\ref{eqn:l1}) we obtain
\begin{align*}
\mld_{\nu^{-1}(Z)}(G^\nu,\fb\cO_{G^\nu})&\le a_{W'}(G^\nu,\fb\cO_{G^\nu})=a_{E_F}(X,G,\fb)=a_{E_F}(X,F,\fa)=c.
\end{align*}
\end{proof}

We shall prove the other inequality $\mld_{\nu^{-1}(Z)}(G^\nu,\fb\cO_{G^\nu})\ge c$ in Theorem \ref{thm:plt} by studying $E\in I_Z$ with $a_{E|_{G'}}(G^\nu,\fb\cO_{G^\nu})\le c$. We fix a prime divisor $E_Z$ on $\bar{X}$ such that $\bar{\varphi}(E_Z)=Z$, and apply Zariski's subspace theorem \cite[(10.6)]{A98} as in the proof of \cite[Lemma 3]{K07} to the natural map $\cO_{X,Z}\to\cO_{\bar{X},E_Z}$ and its specialisations, to fix an integer $l_2\ge l_1$ such that
\begin{align}\label{eqn:l2}
\bar{\varphi}_*\cO_{\bar{X}}(-l_2E_Z)\subset\cI_Z^{l_1}.
\end{align}

\begin{lemma}\label{lem:l2}
If $(F,\fa)\approx_{l_2}(G,\fb)$ and $E\in I_Z$ satisfies $a_{E|_{G'}}(G^\nu,\fb\cO_{G^\nu})\le c$, then
\begin{enumerate}
\item\label{itm:l2:ordJ'}
$\ord_E\tilde{\cJ}'_F=\ord_E\tilde{\cJ}'_G\le(rt)^{-1}c$.
\item\label{itm:l2:ordJ}
$\ord_E\tilde{\cJ}_{r,F}=\ord_E\tilde{\cJ}_{r,G}\le t^{-1}c$.
\item\label{itm:l2:ordX}
$\ord_E\cD'_X\le t^{-1}c$.
\item\label{itm:l2:ordab}
$\ord_E\fa=\ord_E\fb\le t^{-1}c$.
\end{enumerate}
\end{lemma}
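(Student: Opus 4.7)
My plan is to extract the numerical bounds from Lemma \ref{lem:l1}(i) applied under the hypothesis, and then to upgrade the $F$-side inequalities to $F=G$ equalities via Remark \ref{rmk:order}, invoking the hierarchy $l_2\ge l_1>t^{-1}c$ (from (\ref{eqn:estimate_l1}), available since $Z\subsetneq F$) together with $\ord_E\cI_Z\ge 1$ (from $\varphi(E)\subset Z$).

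Substituting $a_{E|_{G'}}(G^\nu,\fb\cO_{G^\nu})\le c$ into Lemma \ref{lem:l1}(i) gives
\[
rt\ord_E\tilde{\cJ}'_F + t\ord_E\cD'_X + t\ord_E\fb \le c,
\]
so nonnegativity of each summand yields $\ord_E\tilde{\cJ}'_F\le(rt)^{-1}c$, $\ord_E\cD'_X\le t^{-1}c$, and $\ord_E\fb\le t^{-1}c$. This settles (iii) and the $F$- resp. $\fb$-sides of (i), (iv). For the equality in (iv), the hypothesis $\fa\sim_{l_2}\fb$ combined with $l_2\ord_E\cI_Z\ge l_2>t^{-1}c\ge\ord_E\fb$ lets me apply Remark \ref{rmk:order} with $\fa$ and $\fb$ interchanged, yielding $\ord_E\fa=\ord_E\fb$.

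The equalities and the bound in (i) and (ii) are tightly linked through the defect identity (\ref{eqn:defectX}). Since $r\ord_E\tilde{\cJ}'_F\le t^{-1}c<\ord_E F$ by Lemma \ref{lem:l1}(ii), comparing $E$-orders in $\tilde{\cJ}'_F{}^r+\cO_X(-F)=\tilde{\cJ}_{r,F}\cdot\cD_{r,X}+\cO_X(-F)$ forces
\[
r\ord_E\tilde{\cJ}'_F=\ord_E\tilde{\cJ}_{r,F}+\ord_E\cD_{r,X},
\]
and analogously for $G$ once (i) is in place. Because $\cD_{r,X}$ is intrinsic to $X$, (i) is equivalent to (ii), and the bound on $\ord_E\tilde{\cJ}_{r,F}$ then follows from Step 1 and $\ord_E\cD_{r,X}\ge 0$. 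The crux is thus (i): $\ord_E\tilde{\cJ}'_F=\ord_E\tilde{\cJ}'_G$. I would derive this locally by choosing defining equations $f,g$ of $F,G$ with $f^n-g^n\in\cI_Z^{nl_2}$ (possible by Lemma \ref{lem:approx}), differentiating to compare the generators of $\tilde{\cJ}'_F$ and $\tilde{\cJ}'_G$ modulo $\cI_Z^{nl_2-1}$, and then invoking Remark \ref{rmk:order} after controlling the $f^{n-1}$ and $g^{n-1}$ factors that arise from the Leibniz rule.

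The main obstacle is precisely step (i): differentiation of $f^n\equiv g^n\pmod{\cI_Z^{nl_2}}$ introduces a factor $f^{n-1}$ whose $E$-order $(n-1)\ord_E F$ is not a priori small relative to $nl_2\ord_E\cI_Z$, since $Z\subset F$ only gives $\ord_E F\ge\ord_E\cI_Z$. Absorbing this loss is exactly the role of the enhanced constant $l_2$ produced by Zariski's subspace theorem in (\ref{eqn:l2}): by constraining $E_Z$-orders of elements that escape $\cI_Z^{l_1}$, it supplies the slack needed to close up the order comparison for the Jacobian-type ideals.
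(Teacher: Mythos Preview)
Your treatment of (\ref{itm:l2:ordX}), (\ref{itm:l2:ordab}), and the reduction of (\ref{itm:l2:ordJ}) to (\ref{itm:l2:ordJ'}) via (\ref{eqn:defectX}) is correct and is essentially the paper's argument. The gap is in (\ref{itm:l2:ordJ'}). First, $F\approx_{l_2}G$ gives only $f^n\cO_X+\cI_Z^{nl_2}=g^n\cO_X+\cI_Z^{nl_2}$, i.e.\ $f^n-ug^n\in\cI_Z^{nl_2}$ for some $u\in\cO_X$; you cannot arrange $u=1$ on $X$ by any choice of local equations, and Lemma~\ref{lem:approx} does not do this for you. More seriously, the Leibniz route fails quantitatively: differentiating a degree-$n$ relation produces at best a comparison of $f^{n-1}\tilde\cJ'_F$ with $g^{n-1}\tilde\cJ'_G$ modulo $\cI_Z^{nl_2-1}$, so to apply Remark~\ref{rmk:order} you would need $(n-1)\ord_EF+\ord_E\tilde\cJ'_F<(nl_2-1)\ord_E\cI_Z$. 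But $n$ comes from the particular $G$ and is unbounded, while $l_2$ must be fixed before $G$ is given; no choice of $l_2$ can dominate $(n-1)\ord_EF$ uniformly in $G$. The constant $l_2$ from (\ref{eqn:l2}) is not there to absorb powers of $f$, and your last paragraph does not explain how it would.

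The paper instead eliminates the $f^{n-1}$ factor. After checking that $u$ is a unit (write also $g^n=vf^n+h'$, so $(1-uv)f^n\in\cI_Z^{nl_2}$, and $\ord_{E_Z}f<l_1$ from (\ref{eqn:l1}) forces $uv$ to be a unit), one passes to an \'etale cover $\tilde U\to X$ adjoining $y$ with $y^n=u$ and factors $f^n-ug^n=\prod_i(f-\mu^iyg)$. The product has $\tilde E_Z$-order at least $nl_2$, so some single linear factor has $\tilde E_Z$-order at least $l_2$; this is precisely where (\ref{eqn:l2}) is used---to convert an $E_Z$-order bound into containment in $\cI_Z^{l_1}\cO_{\tilde U}$. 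Since $F\times_X\tilde U$ and $G\times_X\tilde U$ are defined by $f$ and $\mu^iyg$, differentiating the \emph{linear} relation $f-\mu^iyg\in\cI_Z^{l_1}\cO_{\tilde U}$ gives $\tilde\cJ'_F\cO_{\tilde U}+\cC=\tilde\cJ'_G\cO_{\tilde U}+\cC$ with $\cC\subset\cI_Z^{l_1-1}\cO_{\tilde U}$, and $\ord_E\tilde\cJ'_F\le(rt)^{-1}c<l_1-1$ from (\ref{eqn:estimate_l1}) finishes.
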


\begin{proof}
(\ref{itm:l2:ordJ'}) \
We use explicit descriptions of $\tilde{\cJ}'_F,\tilde{\cJ}'_G$ in terms of Jacobian matrices. Embed $X$ into a smooth ambient space $A$ with local coordinates $x_1,\ldots,x_k$ and take $f,g\in\cO_A$ such that $f|_X,g|_X$ define $F,G$. By $F\approx_{l_2}G$, $f^n\cO_X+\cI_Z^{nl_2}=g^n\cO_X+\cI_Z^{nl_2}$ for some $n$. Note $f^n|_X\not\in\cI_Z^{nl_2}$ by $\ord_{E_Z}f|_X<l_1$ from (\ref{eqn:l1}). If we choose $u,v\in\cO_A$ so that $f^n-ug^n|_X,g^n-vf^n|_X\in\cI_Z^{nl_2}$, then $(1-uv)f^n|_X\in\cI_Z^{nl_2}$ so $uv$ should be a unit. We take an etale cover $\tilde{X}\to X$ by adding a function $y$ with $y^n=u$ to produce the factorisation $f^n-ug^n=\prod_i(f-\mu^iyg)$ with a primitive $n$-th root $\mu$ of unity, and discuss on the germ $\tilde{U}$ at some closed point of $\tilde{X}$. Set the prime divisor $\tilde{E}_Z:=E_Z\times_X\tilde{U}$ on $\tilde{\varphi}\colon\bar{X}\times_X\tilde{U}\to\tilde{U}$. Since $\prod_i(f-\mu^iyg)|_{\tilde{U}}\in\tilde{\varphi}_*\cO_{\bar{X}\times_X\tilde{U}}(-nl_2\tilde{E}_Z)$, with (\ref{eqn:l2}) there exists $i$ such that 
\begin{align*}
f-\mu^iyg|_{\tilde{U}}\in\tilde{\varphi}_*\cO_{\bar{X}\times_X\tilde{U}}(-l_2\tilde{E}_Z)=\bar{\varphi}_*\cO_{\bar{X}}(-l_2E_Z)\otimes_{\cO_X}\cO_{\tilde{U}}\subset\cI_Z^{l_1}\cO_{\tilde{U}}.
\end{align*}
$F\times_X\tilde{U},G\times_X\tilde{U}$ are given by $f|_{\tilde{U}},\mu^iyg|_{\tilde{U}}$. By the description of $\tilde{\cJ}'_F\cO_{\tilde{U}},\tilde{\cJ}'_G\cO_{\tilde{U}}$ in terms of Jacobian matrices, we have
\begin{align*}
\tilde{\cJ}'_F\cO_{\tilde{U}}+\cC=\tilde{\cJ}'_G\cO_{\tilde{U}}+\cC
\end{align*}
for $\cC:=\sum_j(\partial(f-\mu^iyg)/\partial x_j\cdot\cO_{\tilde{U}})\subset\cI_Z^{l_1-1}\cO_{\tilde{U}}$. By Lemma \ref{lem:l1}(\ref{itm:l1:ordJ'D}) and (\ref{eqn:estimate_l1}), for $\tilde{E}:=E\times_X\tilde{U}$
\begin{align*}
\ord_{\tilde{E}}\tilde{\cJ}'_F\cO_{\tilde{U}}&=\ord_E\tilde{\cJ}'_F\le(rt)^{-1}c<l_1-1,\\
\ord_{\tilde{E}}\tilde{\cJ}'_G\cO_{\tilde{U}}&=\ord_E\tilde{\cJ}'_G,
\end{align*}
which provide (\ref{itm:l2:ordJ'}).

(\ref{itm:l2:ordJ}) \
Lemma \ref{lem:l1} implies $\ord_E\tilde{\cJ}'_F{}^r\le t^{-1}c<\ord_EF,\ord_EG$. Thus (\ref{itm:l2:ordJ}) follows from (\ref{itm:l2:ordJ'}) and (\ref{eqn:defectX}) for $F,G$.

(\ref{itm:l2:ordX}) \
It follows from Lemma \ref{lem:l1}(\ref{itm:l1:ordJ'D}).

(\ref{itm:l2:ordab}) \
It follows from Lemma \ref{lem:l1}(\ref{itm:l1:ordJ'D}), (\ref{eqn:estimate_l1}) and Remark \ref{rmk:order}.
\end{proof}

We shall apply motivic integration by Kontsevich in \cite{Kn95} and Denef and Loeser in \cite{DL99} to transversal divisors. We fix notation following \cite[Section 3]{K08}. For a scheme $X$ of dimension $d$, we let $J_nX$ denote its \textit{jet scheme} of order $n$, $J_\infty X$ its \textit{arc space}, and set $\pi_n^X\colon J_\infty X\to J_nX$, $\pi_{nm}^X\colon J_mX\to J_nX$. One has the \textit{motivic measure} $\mu_X\colon\cB_X\to\widehat{\cM}$ from the family $\cB_X$ of \textit{measurable} subsets of $J_\infty X$ to an extension $\widehat{\cM}$ of the Grothendieck ring. $\cB_X$ is an extension of the family of stable subsets. A subset $S$ of $J_\infty X$ is said to be \textit{stable} at level $n$ if $\pi_n^X(S)$ is constructible, $S=(\pi_n^X)^{-1}(\pi_n^X(S))$, and $\pi_{m+1}^X(S)\to\pi_m^X(S)$ is piecewise trivial with fibres $\bA^d$ for $m\ge n$. $S$ has measure
\begin{align*}
\mu_X(S)=[\pi_n^X(S)]\bL^{-(n+1)d}
\end{align*}
with $\bL=[\bA^1]$.

For a morphism $\varphi\colon X\to Y$, we write $\varphi_n\colon J_nX\to J_nY$, $\varphi_\infty\colon J_\infty X\to J_\infty Y$ for the induced morphisms. For a closed subset $Z$, we let $J_nX|_Z,J_\infty X|_Z$ denote the inverse images of $Z$ by $J_nX,J_\infty X\to X$. Finally for an $\bR$-ideal sheaf $\fa$, the \textit{order} $\ord_\fa\gamma$ along $\fa$ is defined for $\gamma\in J_\infty X$. The notion of $\ord_\cI\gamma_n$ for an ideal sheaf $\cI$ makes sense even for $\gamma_n\in J_nX$ as long as $\ord_\cI\gamma_n\le n$.

Back to the theorem, we fix an expression
\begin{align*}
\fa=\fa_1^{r_1}\cdots\fa_k^{r_k}.
\end{align*}
We fix an integer $c_1$ such that
\begin{align}\label{eqn:c1}
c_1\ge t^{-1}c,\qquad c_1\ge(r_it)^{-1}c
\end{align}
for all $i$. Applying Greenberg's result \cite{G66} to $F$, one can find $c_2\ge c_1$ such that
\begin{align}\label{eqn:Greenberg}
\pi_{c_1c_2}^F(J_{c_2}F)=\pi_{c_1}^F(J_\infty F).
\end{align}
We take an integer $l_3\ge l_2$ such that
\begin{align}\label{eqn:l3}
l_3>c_2.
\end{align}

From now on we fix an arbitrary $E\in I_Z$ for $(G,\fb)\approx_{l_3}(F,\fa)$ such that
\begin{align}\label{eqn:c}
a_{E|_{G'}}(G^\nu,\fb\cO_{G^\nu})\le c,
\end{align}
and will derive the opposite inequality $a_{E|_{G'}}(G^\nu,\fb\cO_{G^\nu})\ge c$. To avoid confusion we set $\psi:=\varphi|_{G'}\colon G'\to G$. By blowing up $X'$ further, we may assume that $E'|_{G'}$ is $\psi$-exceptional for all $E'\in I\setminus\{E\}$ with $E|_{G'}\cap E'|_{G'}\neq\emptyset$. Take the subset $T'$ of $J_\infty G'$ which consists of all arcs $\gamma$ such that
\begin{align*}
\ord_{E'|_{G'}}\gamma=
\begin{cases}
1&\text{if $E'=E$,}\\
0&\text{if $E'\in I\setminus\{E\}$, $E'|_{G'}\cap E|_{G'}\neq\emptyset$.}
\end{cases}
\end{align*}
$T'$ is stable at level one. Set $T:=\psi_\infty(T')\subset J_\infty G$, $T'_n:=\pi_n^{G'}(T')\subset J_nG'$ and $T_n:=\pi_n^G(T)=\psi_n(T'_n)\subset J_nG$ as
\begin{align*}
\xymatrix{
J_\infty G'\ar@{}|{\supset}[r]\ar@{>}[d]&
T'\ar@{>>}[r]^{\pi_n^{G'}}\ar@{>>}[d]_{\psi_\infty}&
T'_n\ar@{}|{\subset}[r]\ar@{>>}[d]^{\psi_n}&
J_nG'\ar@{>}[d]\\
J_\infty G\ar@{}|{\supset}[r]&
T\ar@{>>}[r]^{\pi_n^G}&
T_n\ar@{}|{\subset}[r]&
{}\phantom{.}J_nG.
}
\end{align*}
One can regard $J_nF,J_nG\subset J_nX$. Then $F\approx_{l_3}G$ implies $J_{c_2}F|_Z=J_{c_2}G|_Z$ by (\ref{eqn:l3}). Hence by (\ref{eqn:Greenberg})
\begin{align*}
T_{c_1}\subset\pi_{c_1c_2}^G(J_{c_2}G|_Z)=\pi_{c_1c_2}^F(J_{c_2}F|_Z)=\pi_{c_1}^F(J_\infty F|_Z).
\end{align*}
Thus if we set
\begin{align*}
S:=(\pi_{c_1}^F)^{-1}(T_{c_1})\subset J_\infty F
\end{align*}
and $S_n:=\pi_n^F(S)\subset J_nF$, then $S_{c_1}=T_{c_1}$ as
\begin{align}\label{eqn:S=T}
\xymatrix{
J_\infty F\supset S\ar@{>>}[r]^(0.64){\pi_n^F}&
S_n\ar@{>>}[r]^(0.39){\pi_{c_1n}^F}&
S_{c_1}=T_{c_1}.
}
\end{align}

We translate Lemma \ref{lem:l2} into the language of arcs.

\begin{lemma}\label{lem:ST}
\begin{enumerate}
\item\label{itm:ST:ordJ'}
On $S,T$, $\ord_{\tilde{\cJ}'_F}=\ord_{\tilde{\cJ}'_G}$ and takes constant $\ord_E\tilde{\cJ}'_F=\ord_E\tilde{\cJ}'_G\le c_1$.
\item\label{itm:ST:ordJ}
On $S,T$, $\ord_{\tilde{\cJ}_{r,F}}=\ord_{\tilde{\cJ}_{r,G}}$ and takes constant $\ord_E\tilde{\cJ}_{r,F}=\ord_E\tilde{\cJ}_{r,G}\le c_1$.
\item\label{itm:ST:ordD}
On $S,T$, $\ord_{\cD'_X}$ takes constant $\ord_E\cD'_X\le c_1$.
\item\label{itm:ST:ordab}
On $T$, $\ord_\fa=\ord_\fb$ and takes constant $\ord_E\fa=\ord_E\fb\le c_1$. On $S$, $\ord_\fa$ takes constant $\ord_E\fa=\ord_E\fb$.
\end{enumerate}
\end{lemma}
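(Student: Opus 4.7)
The plan is to combine three ingredients: $\varphi\colon X'\to X$ is a log resolution of every ideal sheaf appearing in the statement (since it factors through $\bar{\varphi}$); Lemma \ref{lem:l2} already supplies the equalities of $\ord_E$; and the identity $S_{c_1}=T_{c_1}$ of (\ref{eqn:S=T}) lets one transfer statements from $T$ to $S$ by jet-truncation, once (\ref{eqn:c1}) guarantees that all relevant orders are bounded by $c_1$. First I would verify everything on $T$ by pulling arcs back through $\psi\colon G'\to G$, and then pass to $S$ by matching $c_1$-jets.

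For an arc $\gamma=\psi_\infty(\gamma')\in T$ and any $\cI$ among $\fa_i,\fb_i,\tilde{\cJ}'_F,\tilde{\cJ}'_G,\tilde{\cJ}_{r,F},\tilde{\cJ}_{r,G},\cD'_X$, log resolution makes $\cI\cO_{X'}=\cO_{X'}(-D_\cI)$ locally principal; one checks that $\cI\cO_G$ is nonzero, so $G'\not\subset\Supp D_\cI$, using the stated assumption for $\fb$ together with analogous further blow-ups for the remaining sheaves, the inclusions $\tilde{\cJ}'_F,\tilde{\cJ}_{r,F}\supset\cO_X(-F)$ combined with $F\neq G$, and the transversality of $G$ on $(X,\fb)$ given by Lemma \ref{lem:transversal}. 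Hence $\cI\cO_{G'}=\cO_{G'}(-D_\cI|_{G'})$, and additional blow-ups of $X'$ allow one to assume that the only component of $D_\cI|_{G'}$ crossing $E|_{G'}$ at its generic point is $E|_{G'}$ itself. Since $\ord_{E|_{G'}}\gamma'=1$ and $\ord_{E'|_{G'}}\gamma'=0$ for $E'\in I\setminus\{E\}$ meeting $E|_{G'}$, this yields $\ord_\cI\gamma=\ord_E\cI$. The equalities $\ord_E\tilde{\cJ}'_F=\ord_E\tilde{\cJ}'_G$, $\ord_E\tilde{\cJ}_{r,F}=\ord_E\tilde{\cJ}_{r,G}$ and $\ord_E\fa=\ord_E\fb$ follow from Lemma \ref{lem:l2}, and the bounds $\ord_E\cI\le c_1$ follow from Lemma \ref{lem:l2} with (\ref{eqn:c1}); for (\ref{itm:ST:ordab}) I would also use $\ord_E\fa_i\le r_i^{-1}\ord_E\fa\le(r_it)^{-1}c\le c_1$.

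For the passage to $S$, given $\gamma\in S$ the identity (\ref{eqn:S=T}) yields $\gamma''\in T$ with $\pi_{c_1}^X(\gamma)=\pi_{c_1}^X(\gamma'')$. For any $\cI$ in the list the first step gives $\ord_\cI\gamma''=\ord_E\cI\le c_1$, so this order is determined by the $c_1$-jet and hence $\ord_\cI\gamma=\ord_\cI\gamma''=\ord_E\cI$, yielding the constancy on $S$ and, via Lemma \ref{lem:l2}, the cross-equalities between the $F$- and $G$-versions. The hard part will be the verification that none of the $\cI\cO_{X'}$ has $G'$ (respectively $F'$ for the $S$-computations) as a component of its support, so that restriction remains locally principal; the most delicate cases are $\tilde{\cJ}'_F$ and $\tilde{\cJ}_{r,F}$ on $G$, where I would rely on $\cO_X(-F)\subset\tilde{\cJ}'_F,\tilde{\cJ}_{r,F}$ with $F\neq G$, combined with the further blow-ups that secure the one-component picture at generic points of $E|_{G'}$.
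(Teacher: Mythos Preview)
Your proposal is correct and follows the same route as the paper's one-line proof: constancy on $T$ comes from the log-resolution property of $\varphi$ combined with the definition of $T'$, the equalities and bounds are supplied by Lemma~\ref{lem:l2} together with (\ref{eqn:c1}), and the passage to $S$ goes through the identity $S_{c_1}=T_{c_1}$ since every order in question is at most $c_1$. The paper's explicit note $\ord_E\fa_i\le r_i^{-1}\ord_E\fa\le c_1$ is exactly the inequality you isolate for (\ref{itm:ST:ordab}), needed so that each $\ord_{\fa_i}$ is determined by the $c_1$-jet.

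One small point deserves a cleaner justification. For the sheaves $\tilde{\cJ}'_F,\tilde{\cJ}_{r,F},\tilde{\cJ}'_G,\tilde{\cJ}_{r,G},\cD'_X$ there is in fact nothing to blow up: their zero loci on $X$ sit in $\mathrm{Sing}(F)$, $\mathrm{Sing}(G)$, or the non-l.c.i.\ locus of $X$, all of codimension $\ge2$, so the pullbacks $D_\cI$ are purely exceptional and your computation $\ord_\cI\gamma=\ord_E\cI$ is immediate. The case that genuinely needs care is $\fa_i$: your reason ``transversality of $G$ on $(X,\fb)$'' only handles $\fb_i$, not $\fa_i$, and the inclusion $\supset\cO_X(-F)$ with $F\neq G$ is irrelevant here. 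The quickest fix is to first establish $\ord_\fb\gamma=\ord_E\fb\le t^{-1}c$ on $T$ from the stated assumption on $\fb$, and then apply the computation behind Remark~\ref{rmk:order} to the arc valuation itself: since $l_3\ord_{\cI_Z}\gamma\ge l_3>c_1\ge\ord_\fb\gamma$, the relation $\fa\sim_{l_3}\fb$ forces $\ord_\fa\gamma=\ord_\fb\gamma$. This simultaneously shows $\ord_{\fa_i}\gamma<\infty$, hence $\fa_i\cO_G\neq0$, so your blow-up route is available after all; either way the conclusion is the same.
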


\begin{proof}
It is obvious by Lemma \ref{lem:l2}, (\ref{eqn:c1}) and the construction of $T'$. Note $\ord_E\fa_i\le r_i^{-1}\ord_E\fa\le c_1$.
\end{proof}

Let $\cJ_\psi$ be the image of the natural map $\psi^*\Omega_G^{d-1}\otimes\omega_{G'}^{-1}\to\cO_{G'}$. By definition we obtain the equality
\begin{align*}
\cJ_\psi^r=\tilde{\cJ}_{r,G}\cO_{G'}\big(-r\sum_{E'\in I}(a_{E'|_{G'}}(G^\nu)-1)E'|_{G'})\big).
\end{align*}
Hence $\cJ_\psi$ is resolved on $G'$, and on $T'$ the order along $\cJ_\psi$ takes constant 
\begin{align*}
e:=\ord_{E|_{G'}}\cJ_\psi=r^{-1}\ord_E\tilde{\cJ}_{r,G}+a_{E|_{G'}}(G^\nu)-1.
\end{align*}

We use the following form of \cite[Lemma 4.1]{DL99} to estimate $\mu_F(S)$.

\begin{proposition}\label{prp:fibre}
Let $X$ be a reduced scheme of pure dimension, and $L_n^X$ the locus of $J_\infty X$ on which the orders along the Jacobian ideal sheaf $\cJ'_X$ and the l.c.i.\ defect ideal sheaf $\cD'_X$ are at most $n$. Then $L_n^X$ is stable at level $n$.
\end{proposition}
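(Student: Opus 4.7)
The plan is to reduce to the l.c.i.\ case of \cite[Lemma 4.1]{DL99} by realising $X$ locally as a component of suitable l.c.i.\ ambient schemes. The statement is local on $X$, so I would work with a fixed embedding $X\hookrightarrow A$ into a smooth scheme in a neighbourhood of any chosen point.

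First I would stratify $L_n^X$ according to the l.c.i.\ scheme witnessing the bound on $\cD'_X$. By the definition $\cD'_X=\sum_Y\cC_{X/Y}$ over general l.c.i.\ $Y\supset X$ of dimension $d$, every arc $\gamma\in L_n^X$ satisfies $\ord_{\cC_{X/Y}}\gamma\le n$ for at least one such $Y$. By Noetherianity, finitely many $Y_1,\ldots,Y_s$ suffice to cover $L_n^X$ by the subsets
\begin{align*}
L_n^X[Y_j]:=L_n^X\cap\{\gamma\in J_\infty X\mid\ord_{\cC_{X/Y_j}}\gamma\le n\},
\end{align*}
and it is enough to prove that each $L_n^X[Y_j]$ is stable at level $n$.

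For a fixed $Y$ with decomposition $Y=X\cup C^Y$, the closed embedding $X\hookrightarrow Y$ induces closed embeddings $J_nX\hookrightarrow J_nY$ and $J_\infty X\hookrightarrow J_\infty Y$. The bound $\ord_{\cC_{X/Y}}\gamma\le n$ ensures that the $n$-jet of $\gamma$ already pins down that $\gamma$ comes from $X$ rather than from $C^Y$. Next I would use a multiplicative relation between $\cJ'_X$, $\cJ'_Y\cO_X$ and $\cC_{X/Y}$, analogous to the identity ${\cJ'_G}^r=\cJ_{r,G}\cdot\cD_{r,X}\cO_G$ derived in Section~1, to convert the bound $\ord_{\cJ'_X}\gamma\le n$ on $L_n^X[Y]$ into a bound on $\ord_{\cJ'_Y}\gamma$ controlled by $n$. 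Then \cite[Lemma 4.1]{DL99} applied to the l.c.i.\ scheme $Y$ yields piecewise triviality with fibres $\bA^d$ of the truncation maps above the image of $L_n^X[Y]$ in $J_\infty Y$, and restricting back to $J_\infty X\subset J_\infty Y$, where the branch is already determined at level $n$, descends the piecewise triviality to $L_n^X[Y]$ at level $n$.

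The main obstacle is the sharp comparison between $\cJ'_X$ and $\cJ'_Y$ on $L_n^X[Y]$: one needs an identity or inclusion, playing the role of the $r$-Gorenstein identity above for arbitrary reduced $X$, that is tight enough for stability to be achieved exactly at level $n$ rather than at a larger level $Cn$. Showing that the l.c.i.\ defect ideal sheaf $\cD'_X$ faithfully tracks how far $X$ fails to be l.c.i.\ locally, in a way compatible with arc orders, is the technical heart of the argument and the point where the formulation of $\cD'_X$ as a sum over all general $Y$ is essential.
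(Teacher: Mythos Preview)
Your overall strategy---reduce to the l.c.i.\ case and invoke \cite[Lemma 4.1]{DL99}---matches the paper's. The paper also uses the conductor annihilation $\cC_{X/Y}\cI_{X/Y}=0$ to identify $(\pi_n^X)^{-1}(U)=(\pi_n^Y)^{-1}(U)$ over a suitable neighbourhood, which is your ``the $n$-jet already pins down that $\gamma$ comes from $X$'' made precise.

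The gap is in how you select $Y$. You stratify $L_n^X$ by the conductor alone, then hope a multiplicative relation among $\cJ'_X$, $\cJ'_Y\cO_X$ and $\cC_{X/Y}$ will force $\ord_{\cJ'_Y}\le n$. Even when such a relation exists (e.g.\ $\cJ'_Y\cO_X$ behaves like $\cJ'_X\cdot\cC_{X/Y}$ in simple cases), it only yields $\ord_{\cJ'_Y}\le 2n$, giving stability at level $2n$---exactly the obstacle you flag, and it is real for your execution. The paper sidesteps this entirely: it does not fix $Y$ by the conductor and then chase the Jacobian. Instead, for each $n$-jet $\gamma_n\in\pi_n^X(L_n^X)$ it picks $Y$ so that \emph{both} $\ord_{\cJ'_Y}\le\ord_{\cJ'_X}(\gamma_n)\le n$ and $\ord_{\cC_{X/Y}}\le\ord_{\cD'_X}(\gamma_n)\le n$ hold on a neighbourhood $U_{\gamma_n}$ in $J_nY$. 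This is possible because $\cD'_X=\sum_Y\cC_{X/Y}$ by definition, and equally $\cJ'_X$ is the sum over general $Y$ of $\cJ'_Y\cO_X$ (varying the $c$ generators of $I_Y$ inside $I_X$ recovers all $c\times c$ minors); a single general $Y$ then achieves both minima simultaneously at $\gamma_n$. With this choice, $(\pi_n^X)^{-1}(U_{\gamma_n})=(\pi_n^Y)^{-1}(U_{\gamma_n})\subset L_n^Y$, and \cite[Lemma 4.1]{DL99} on the l.c.i.\ $Y$ gives stability at level $n$ directly---no multiplicative identity is needed.

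So the fix is simple: choose $Y$ jet-by-jet to control $\cJ'_Y$ and $\cC_{X/Y}$ at once, rather than stratifying by the conductor and then trying to recover the Jacobian bound.
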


\begin{proof}
For a l.c.i.\ scheme, the proposition follows from the proof of \cite[Lemma 4.1]{DL99} directly. Note that the l.c.i.\ defect ideal sheaf of a l.c.i.\ scheme is trivial.

For general $X$, we fix a jet $\gamma_n\in \pi_n^X(L_n^X)$. By the definitions of $\cJ'_X,\cD'_X$, one can embed $X$ into a l.c.i.\ scheme $Y=X\cup C^Y$ as (\ref{eqn:lci}) so that on a neighbourhood $U_{\gamma_n}$ of $\gamma_n$ in $J_nY$, $\ord_{\cJ'_Y}\le\ord_{\cJ'_X}(\gamma_n)$ and $\ord_{\cC_{X/Y}}\le\ord_{\cD'_X}(\gamma_n)$ for the Jacobian $\cJ'_Y$ and the conductor $\cC_{X/Y}$. Then $(\pi_n^X)^{-1}(U_{\gamma_n})\subset L_n^X$ and $(\pi_n^Y)^{-1}(U_{\gamma_n})\subset L_n^Y$. By $\cC_{X/Y}\cI_{X/Y}=0$ for the ideal sheaf $\cI_{X/Y}$ of $X$ on $Y$, we have $J_\infty Y\setminus(\ord_{\cC_{X/Y}})^{-1}(\infty)\subset J_\infty X$. Hence $(\pi_n^X)^{-1}(U_{\gamma_n})=(\pi_n^Y)^{-1}(U_{\gamma_n})$, and the statement is reduced to that of the l.c.i.\ scheme $Y$.
\end{proof}

\begin{lemma}\label{lem:dimS}
$\mu_F(S)=\mu_G(T)=\mu_{G'}(T')\bL^{-e}$.
\end{lemma}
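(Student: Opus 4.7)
The lemma has two equalities, and I would prove them by feeding a single motivic computation through the birational map $\psi\colon G'\to G$. The second equality, $\mu_G(T)=\mu_{G'}(T')\bL^{-e}$, follows from the motivic change of variables formula (in the form extended to singular targets in \cite{K08}) once one checks the two standard conditions: (i) $\psi_\infty$ restricts to a bijection $T'\to T$ up to a measure-zero set, and (ii) $\ord_{\cJ_\psi}=e$ on $T'$. Condition (i) is immediate because the defining orders of $T'$ force any $\gamma'\in T'$ to meet $E|_{G'}$ transversally and to avoid the other $E'|_{G'}$ intersecting $E|_{G'}$, so $\gamma'$ is not contained in any $\psi$-exceptional divisor; properness and birationality then give the bijection. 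Condition (ii) is precisely the constancy $e=\ord_{E|_{G'}}\cJ_\psi$ on $T'$ that the paper already recorded just before the lemma, using the explicit resolution of $\cJ_\psi^r$ on $G'$.

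For the first equality $\mu_F(S)=\mu_G(T)$, the plan is to show that both sides coincide with $[T_{c_1}]\bL^{-(c_1+1)(d-1)}$. Lemma \ref{lem:ST}(\ref{itm:ST:ordJ'}) and (\ref{itm:ST:ordD}), combined with the identities $\cD'_F=\cD'_X\cO_F$ and $\cD'_G=\cD'_X\cO_G$ coming from (\ref{eqn:defect}), place $S\subset L_{c_1}^F$ and $T\subset L_{c_1}^G$, so Proposition \ref{prp:fibre} supplies stability of the ambient loci at level $c_1$. Since $S=(\pi_{c_1}^F)^{-1}(S_{c_1})$ by construction and $S_{c_1}=T_{c_1}$ by (\ref{eqn:S=T}), $S$ itself is stable at level $c_1$, yielding $\mu_F(S)=[S_{c_1}]\bL^{-(c_1+1)(d-1)}=[T_{c_1}]\bL^{-(c_1+1)(d-1)}$.

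For $\mu_G(T)$, I would combine the second equality with the stability of $T'$ at level one on the smooth variety $G'$, which gives $\mu_{G'}(T')=[T'_{c_1}]\bL^{-(c_1+1)(d-1)}$ and hence $\mu_G(T)=[T'_{c_1}]\bL^{-(c_1+1)(d-1)-e}$. It then remains to check the fibre-dimension identity $[T'_{c_1}]=[T_{c_1}]\bL^e$ in the Grothendieck ring: this is the statement that $\psi_{c_1}$ restricted to $T'_{c_1}$ is a piecewise trivial $\bA^e$-bundle over $T_{c_1}$, valid because $\ord_{\cJ_\psi}=e$ is constant on $T'$ and $c_1$ has been chosen large enough, via the bounds in (\ref{eqn:c1}) and Lemma \ref{lem:ST}, for the $c_1$-truncation to see the full Jacobian fibre of $\psi$. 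I expect this infinitesimal form of the change of variables at level $c_1$ to be the main technical step; once it is in hand, the chain $\mu_F(S)=[T_{c_1}]\bL^{-(c_1+1)(d-1)}=\mu_G(T)$ closes and the lemma is complete.
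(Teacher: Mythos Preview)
Your route to the second equality via the arc-level change-of-variables formula is sound and somewhat cleaner than the paper's jet-level computation: uniqueness of lifts under the proper birational $\psi$ does give $\psi_\infty^{-1}(T)=T'$ outside a thin set, and $\ord_{\cJ_\psi}\equiv e$ on $T'$ was already recorded. The paper instead works entirely at a finite jet level $n$ and obtains this equality as a by-product of the identity $T'_n=\psi_n^{-1}(T_n)$.

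There is, however, a genuine gap in your treatment of the first equality. You claim the piecewise triviality of $\psi_{c_1}\colon T'_{c_1}\to T_{c_1}$ with fibre $\bA^e$, arguing that $c_1$ is large enough via (\ref{eqn:c1}) and Lemma~\ref{lem:ST}. Those bounds control only $\ord_E\tilde{\cJ}'_F$, $\ord_E\tilde{\cJ}_{r,G}$, $\ord_E\cD'_X$, $\ord_E\fa_i$ by $c_1$; they do \emph{not} bound $e=r^{-1}\ord_E\tilde{\cJ}_{r,G}+a_{E|_{G'}}(G^\nu)-1$, since $a_{E|_{G'}}(G^\nu)=a_{E|_{G'}}(G^\nu,\fb\cO_{G^\nu})+\ord_E\fb$ can be as large as $c+t^{-1}c$. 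So $e$ may well exceed $c_1$, and \cite[Lemma~3.4]{DL99} is not available at level $c_1$; this is exactly why the paper passes to a level $n\ge c_1,e,1$. Even at such an $n$, the crux is $\psi_n^{-1}(T_n)\subset T'_n$, which does not follow formally: one must exclude $n$-jets lying over the other exceptional components $E'|_{G'}$. The paper supplies a real argument here---an irreducibility comparison forcing the closure of $\psi_n^{-1}(U_n)$ to equal the prime divisor $J_nG'|_{E|_{G'}}$, then a fibre-dimension count $\dim\chi_n^{-1}(t)\ge e=\dim\psi_n^{-1}(t)$ to get $\psi_n^{-1}(T_n)\subset J_nG'|_{E|_{G'}}$, and finally the constancy $\ord_{\cJ_\psi}=e$ on $\psi_n^{-1}(T_n)$ together with the strict positivity $\ord_{E'|_{G'}}\cJ_\psi>0$ for neighbouring $E'$ (ensured by the extra blow-ups making those $E'|_{G'}$ $\psi$-exceptional). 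You correctly flagged this as ``the main technical step'', but it relies on this geometric input about the exceptional configuration, not on the numerical bounds you cite. Note also that the paper reaches $\mu_F(S)=\mu_G(T)$ more directly than you: Proposition~\ref{prp:fibre} applied to $S\subset L_{c_1}^F$, $T\subset L_{c_1}^G$ gives stability of both at level $c_1$, so $S_{c_1}=T_{c_1}$ already yields the first equality without routing through the second.
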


\begin{proof}
We apply Proposition \ref{prp:fibre} to $S\subset L_{c_1}^F$, $T\subset L_{c_1}^G$ by Lemma \ref{lem:ST}(\ref{itm:ST:ordJ'}), (\ref{itm:ST:ordD}) and (\ref{eqn:defect}), to obtain their stabilities at level $c_1$ and by $S_{c_1}=T_{c_1}$ in (\ref{eqn:S=T}) 
\begin{align*}
\mu_F(S)=\mu_G(T).
\end{align*}

By \cite[Lemma 3.4]{DL99} for $T\subset\cL^{(c_1)}(G)$ with notation in \cite{DL99}, there exists $n\ge c_1,e,1$ such that $\ord_{\cJ_\psi}$ takes constant $e$ on $\psi_n^{-1}(T_n)$, and that $\psi_n^{-1}(T_n)\to T_n$ is piecewise trivial with fibres $\bA^e$. If the equality $T'_n=\psi_n^{-1}(T_n)$ holds, then
\begin{align*}
\mu_G(T)=[T_n]\bL^{-(n+1)(d-1)}=[T'_n]\bL^{-(n+1)(d-1)-e}=\mu_{G'}(T')\bL^{-e}.
\end{align*}
Thus it suffices to prove $\psi_n^{-1}(T_n)\subset T'_n$.

Take a variety $U_n$ dense in $T_n$ such that $\psi_n^{-1}(U_n)$ is irreducible. The closure $C_n$ of $\psi_n^{-1}(U_n)$ in $J_nG'$ contains the closure $J_nG'|_{E|_{G'}}$ of $T'_n$, which is a prime divisor. Thus $C_n=J_nG'|_{E|_{G'}}$ by the irreducibility of $C_n$, so the image of the restricted morphism $\chi_n\colon J_nG'|_{E|_{G'}}\to J_nG$ contains $T_n$. Its fibre $\chi_n^{-1}(t)$ at $t\in T_n$ has dimension at least $e$ and is contained in $\psi_n^{-1}(t)\simeq\bA^e$. Hence $\chi_n^{-1}(t)=\psi_n^{-1}(t)$ as $\chi_n^{-1}(t)$ is closed. This means $\psi_n^{-1}(T_n)\subset J_nG'|_{E|_{G'}}$.

Consider on $\psi_n^{-1}(T_n)$ the constant function
\begin{align*} 
e=\ord_{\cJ_\psi}=\sum_{E'\in I}(\ord_{E'|_{G'}}\cJ_\psi)\cdot\ord_{E'|_{G'}}.
\end{align*}
Note that
\begin{align*}
\ord_{E|_{G'}}\cJ_\psi=e,\qquad\ord_{E'|_{G'}}\cJ_\psi>0\text{\ for $E'\in I\setminus\{E\}$, $E'|_{G'}\cap E|_{G'}\neq\emptyset$},
\end{align*}
because such $E'|_{G'}$ is $\psi$-exceptional and $\cJ_\psi$ vanishes on the support of $\Omega_{G'/G}$. Moreover $\ord_{E|_{G'}}$ is positive on $\psi_n^{-1}(T_n)\subset J_nG'|_{E|_{G'}}$. Hence $\psi_n^{-1}(T_n)\subset T'_n$ by the definition of $T'$.
\end{proof}

\begin{remark}
We need only the inequality $\dim\mu_F(S)\ge\dim\mu_{G'}(T')\bL^{-e}$ for the proof of Theorem \ref{thm:plt}.
\end{remark}

We shall complete the proof by using the below description of $c=\mld_Z(F,\fa\cO_F)$ in terms of motivic integration by \cite{EMY03}; see also \cite[Remark 3.3]{K08}.
\begin{align}\label{eqn:mld}
c=-\dim\int_{J_\infty F|_Z}\bL^{r^{-1}\ord_{\tilde{\cJ}_{r,F}}+\ord_\fa}d\mu_F.
\end{align}

\begin{lemma}\label{lem:hard-plt}
If $(F,\fa)\approx_{l_3}(G,\fb)$, then $\mld_Z(F,\fa\cO_F)\le\mld_{\nu^{-1}(Z)}(G^\nu,\fb\cO_{G^\nu})$.
\end{lemma}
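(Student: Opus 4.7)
The plan is to apply the motivic-integration description (\ref{eqn:mld}) of $c=\mld_Z(F,\fa\cO_F)$ and estimate it from above by restricting the integral over $J_\infty F|_Z$ to the measurable subset $S$. Since $S\subset J_\infty F|_Z$ and the integrand is a positive motivic function, I obtain
\begin{align*}
c\le -\dim\int_S\bL^{r^{-1}\ord_{\tilde{\cJ}_{r,F}}+\ord_\fa}d\mu_F.
\end{align*}
By Lemma \ref{lem:ST}(\ref{itm:ST:ordJ}), (\ref{itm:ST:ordab}) both functions in the exponent are constant on $S$, with values $\ord_E\tilde{\cJ}_{r,F}$ and $\ord_E\fa$, so the right-hand side equals $-\dim\mu_F(S)-r^{-1}\ord_E\tilde{\cJ}_{r,F}-\ord_E\fa$.

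Next I would substitute $\mu_F(S)=\mu_{G'}(T')\bL^{-e}$ from Lemma \ref{lem:dimS} with $e=r^{-1}\ord_E\tilde{\cJ}_{r,G}+a_{E|_{G'}}(G^\nu)-1$, and use the equalities $\ord_E\tilde{\cJ}_{r,F}=\ord_E\tilde{\cJ}_{r,G}$ and $\ord_E\fa=\ord_E\fb$ from Lemma \ref{lem:l2}(\ref{itm:l2:ordJ}), (\ref{itm:l2:ordab}). The contributions involving $\tilde{\cJ}_{r,F}$ then cancel against those in $e$, reducing the bound to
\begin{align*}
c\le -\dim\mu_{G'}(T')+a_{E|_{G'}}(G^\nu)-1-\ord_E\fb.
\end{align*}

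It remains to show $\dim\mu_{G'}(T')=-1$. Since $T'$ is stable at level one on the smooth $(d-1)$-dimensional variety $G'$, we have $\mu_{G'}(T')=[\pi_1^{G'}(T')]\bL^{-2(d-1)}$. The image $\pi_1^{G'}(T')$ consists of $1$-jets $(p,v)$ with $p\in E|_{G'}$ and $v\not\in T_p(E|_{G'})$ (because $\ord_{E|_{G'}}$ takes the value exactly one), together with the open conditions $p\notin E'|_{G'}$ for the other divisors $E'\in I\setminus\{E\}$ meeting $E|_{G'}$. The latter are open on $E|_{G'}$ and do not lower the dimension; hence $\pi_1^{G'}(T')$ has dimension $(d-2)+(d-1)=2d-3$, giving $\dim\mu_{G'}(T')=-1$. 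Substituting yields $c\le a_{E|_{G'}}(G^\nu)-\ord_E\fb=a_{E|_{G'}}(G^\nu,\fb\cO_{G^\nu})$, as desired.

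The main technical obstacle I anticipate is verifying the dimension count of $\pi_1^{G'}(T')$: one must confirm that the open conditions imposed by divisors $E'|_{G'}$ crossing $E|_{G'}$ really are open on the $(d-2)$-dimensional base $E|_{G'}$, which is ensured because these $E'|_{G'}$ differ from $E|_{G'}$ as prime divisors. The rest of the argument is bookkeeping: the equalities from Lemma \ref{lem:l2} match the Jacobian and ideal-sheaf orders on $F$ and $G$, while Lemma \ref{lem:dimS} transfers the motivic measure computation from $F$ to $G'$ at the exact cost predicted by $a_{E|_{G'}}(G^\nu)$.
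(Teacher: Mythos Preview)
Your proposal is correct and follows essentially the same route as the paper: restrict the motivic integral in (\ref{eqn:mld}) to $S$, use Lemma \ref{lem:ST} to make the exponent constant, apply Lemma \ref{lem:dimS} to pass to $\mu_{G'}(T')$, and then read off $c\le a_{E|_{G'}}(G^\nu,\fb\cO_{G^\nu})$. The only difference is that you spell out the computation $\dim\mu_{G'}(T')=-1$ explicitly, whereas the paper simply inserts the value $-1$ into the chain of equalities without further comment.
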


\begin{proof}
We have fixed an arbitrary $E\in I_Z$ which satisfies (\ref{eqn:c}). By Lemma \ref{lem:ST}(\ref{itm:ST:ordJ}), (\ref{itm:ST:ordab}), $\ord_{\tilde{\cJ}_{r,F}},\ord_\fa$ take constants $\ord_E\tilde{\cJ}_{r,G},\ord_E\fb$ on $S$. Thus with Lemma \ref{lem:dimS},
\begin{align*}
\int_S\bL^{r^{-1}\ord_{\tilde{\cJ}_{r,F}}+\ord_\fa}d\mu_F&=\mu_F(S)\bL^{r^{-1}\ord_E\tilde{\cJ}_{r,G}+\ord_E\fb}\\
&=\mu_{G'}(T')\bL^{r^{-1}\ord_E\tilde{\cJ}_{r,G}+\ord_E\fb-e},
\end{align*}
and
\begin{align*}
\dim\int_{J_\infty F|_Z}\bL^{r^{-1}\ord_{\tilde{\cJ}_{r,F}}+\ord_\fa}d\mu_F&\ge\dim\int_S\bL^{r^{-1}\ord_{\tilde{\cJ}_{r,F}}+\ord_\fa}d\mu_F\\
&=-1+r^{-1}\ord_E\tilde{\cJ}_{r,G}+\ord_E\fb-e\\
&=-a_{E|_{G'}}(G^\nu)+\ord_E\fb\\
&=-a_{E|_{G'}}(G^\nu,\fb\cO_{G^\nu}).
\end{align*}
Hence $a_{E|_{G'}}(G^\nu,\fb\cO_{G^\nu})\ge c$ by (\ref{eqn:mld}), which proves the lemma.
\end{proof}

Theorem \ref{thm:plt} is therefore proved.

{\small
\begin{acknowledgements}
This research has originated in the problem of Professor M. Musta\c{t}\u{a}. I was asked it at the workshop at MSRI in 2007, and discussed with him during my visit at University of Michigan in 2009. I really appreciate his approval of introducing his problem and observation. I am also grateful to him for valuable discussions and cordial hospitality. Partial support was provided by Grant-in-Aid for Young Scientists (A) 20684002.
\end{acknowledgements}
}

\bibliographystyle{amsplain}

\end{document}